
\documentclass[11pt]{amsart}

\usepackage{amsfonts,amsmath,amssymb,amsthm}
\usepackage{mathrsfs}
\usepackage{color}



\newtheorem{thm}{Theorem}[section]
\newtheorem{lem}[thm]{Lemma}
\newtheorem{prp}[thm]{Proposition}
\newtheorem{cor}[thm]{Corollary}
\theoremstyle{definition}
\newtheorem{dfn}[thm]{Definition}
\theoremstyle{remark}
\newtheorem{rem}[thm]{Remark}
\newtheorem{ex}[thm]{Example}
\newtheorem{notation}[thm]{Notation}



\numberwithin{equation}{section}



 \DeclareMathOperator{\grad}{grad}

\DeclareMathOperator{\trace}{trace}
\DeclareMathOperator{\curl}{curl}
%





\begin{document}
\thispagestyle{empty}

\newpage

\title[Killing Vector Fields of Standard Static Space-times]
{Killing Vector Fields of Standard Static Space-times}
\author{Fernando Dobarro}
\address[F. Dobarro]{Dipartimento di Matematica e Informatica,
Universit\`{a} degli Studi di Trieste, Via Valerio 12/B, I-34127
Trieste, Italy} \email{dobarro@dmi.units.it}
\author{B\"{u}lent \"{U}nal}
\address[B. \"{U}nal]{Department of Mathematics, Bilkent University,
         Bilkent, 06800 Ankara, Turkey}
\email{bulentunal@mail.com}
\keywords{Warped products, Killing vector fields, standard static
space-times, non-rotating vector fields.}
%
%
%
\subjclass{53C21, 53C50, 53C80}
\date{\today}

\begin{abstract}
We consider Killing vector fields on standard static space-times and
obtain equations for a vector field on a standard static space-time
to be Killing. We also provide a characterization of Killing vector
fields on standard static space-times with compact Riemannian parts.
\end{abstract}

\maketitle

\tableofcontents

\newpage

\renewcommand{\thepage}{\arabic{page}}
\setcounter{page}{1}

\section{Introduction}

Warped product manifolds were introduced in general relativity as
a method to find general solutions to Einstein's field equations
\cite{BEE,ON}. Two important examples include generalized
Robertson-Walker space-times and standard static space-times.  The
former are obviously a generalization of Robertson-Walker
space-times and the latter a generalization of the Einstein static
universe. In this paper, we focus on Killing vector fields for standard
static space-times.

We recall that a warped product can be defined as follows
\cite{BEE,ON}. Let $(B,g_B)$ and $(F,g_F)$ be pseudo-Riemannian
manifolds and also let $b \colon B \to (0,\infty)$ be a smooth
function. Then the (singly) warped product, $B \times_b F$ is the
product manifold $B \times F$ furnished with the metric tensor
$g=g_B \oplus b^{2}g_F$ defined by $$ g=\pi^{\ast}(g_B) \oplus (b
\circ \pi)^2 \sigma^{\ast}(g_F)$$ where $\pi \colon B \times F \to
B$ and $\sigma \colon B \times F \to F$ are the usual projection
maps and ${}^\ast$~denotes the pull-back operator on tensors. A
standard static space-time can be considered as a Lorentzian
warped product where the warping function is defined on a
Riemannian manifold called the base and acting on the negative
definite metric on an open interval of real numbers, called the
fiber. More precisely, a standard static space-time $(t_1,t_2) _f
\times F$ is a Lorentzian warped product furnished with the metric
$g=-f^2{\rm d}t^2 \oplus g_F,$ where $(F,g_F)$ is a Riemannian
manifold, $f \colon F \to (0,\infty)$ is smooth and $-\infty \leq
t_1 < t_2 \leq \infty.$ In \cite{ON}, it was shown that any static
space-time is locally isometric to a standard static space-time.

\noindent Standard static space-times have been previously studied
by many authors. Kobayashi and Obata \cite{KO} stated the geodesic
equation for this class of space-times and the causal structure
and geodesic completeness was considered in \cite{AD}, where
sufficient conditions on the warping function for nonspacelike
geodesic completeness of the standard static space-time was
obtained (see also \cite{RASM}). In \cite{AD1}, conditions are
found which guarantee that standard static space-times either
satisfy or else fail to satisfy certain curvature conditions from
general relativity. In \cite{ADt}, D. Allison considered the
global hyperbolicity of standard static space-times and obtained
sufficient conditions. The problems of geodesic completeness and
global hyperbolicity of standard static space-times have been also
studied by M. S\'{a}nchez in \cite{MS05} with $I=\mathbb{R}$ where
$(F,g_F)$ behaves at most quadratically (at infinity). The
existence of geodesics in standard static space-times have been
studied by several authors. S\'{a}nchez \cite{mS2} gives a good
overview of geodesic connectedness in semi-Riemannian manifolds,
including a discussion for standard static space-times. The
geodesic structure of standard static space-times has been studied
in \cite{GES} and conditions are found which imply nonreturning
and pseudoconvex geodesic systems. As a consequence, it is shown
that if the complete Riemannian factor manifold $F$ satisfies the
nonreturning property and has a pseudoconvex geodesic system and
if the warping function $f \colon F \to (0,\infty)$ is bounded
from above then the standard static space-time $(a,b)_f\times F $
is geodesically connected. In \cite{SSS}, some conditions for the
Riemannian factor and the warping function of a standard static
space-time are obtained in order to guarantee that no nontrivial
warping function on the Riemannian factor can make the standard
static space-time Einstein. In a recent note
\cite{Sanchez-Senovilla2007}, the authors discussed conditions for
static Killing vector fields to be standard and then they obtained
an interesting uniqueness result when the so called natural space
(in the case of a standard static space-time, this is the
Riemannian part) is compact.

\noindent Two of the most famous examples of standard static
space-times are the Minkowski space-time and the Einstein static
universe \cite{BEE,dewitt 2003,HE} which is $\mathbb R \times
\mathbb S^3$ equipped with the metric
$$g=-{\rm d}t^2+({\rm d}r^2+\sin^2 r {\rm d}\theta^2+
\sin^2 r \sin^2 \theta {\rm d} \phi^2)$$ where $\mathbb S^3$ is
the usual 3-dimensional Euclidean sphere and the warping function
$f \equiv 1$ (\textit{see Remark \ref{rem:sharipov2007}}).
Another well-known example is the universal covering space of
anti-de Sitter space-time, a standard static space-time of the
form $\mathbb R _f\times \mathbb H^3$ where $\mathbb H^3$ is the
3-dimensional hyperbolic space with constant negative sectional
curvature and the warping function $f \colon \mathbb H^3 \to
(0,\infty)$ defined as $f(r,\theta,\phi)=\cosh r$ \cite{BEE,HE}.
Finally, we can also mention the Exterior Schwarzschild space-time
\cite{BEE,HE}, a standard static space-time of the form $\mathbb R
_f\times (2m,\infty) \times \mathbb S^2,$ where $\mathbb S^2$ is
the 2-dimensional Euclidean sphere, the warping function $f \colon
(2m,\infty) \times \mathbb S^2 \to (0,\infty)$ is given by
$f(r,\theta,\phi)=\sqrt{1-2m/r},$ $r>2m$ and the line element on
$(2m,\infty) \times \mathbb S^2$ is
$${\rm d}s^2=\left(1-\frac{2m}{r} \right)^{-1} {\rm d}r^2+r^2({\rm
d} \theta^2+\sin^2 \theta {\rm d} \phi^2).$$

In this article, we deal with questions of existence and
characterization of Killing vector fields in standard static space
times\footnote{We would like to inform the reader that some of the
results provided in this article were previously announced in the
arXiv preprint service as \cite{dobarro-unal06}.}.

\noindent The problem of the existence of Killing vector fields in
semi-Riemannian manifolds has been analyzed by many authors
(physicists and mathematicians) with different points of view and by
using several techniques. One of the recent articles of S\'{a}nchez
(i.e, \cite{mS1997}) is devoted to provide a review about these
questions in the framework of Lorentzian geometry. Another
interesting results for Riemannian warped products can be found in
\cite{B-O} and for $4-$dimensional warped space-times in
\cite{Carot-da Costa 93}.

\noindent In \cite{mSK} S\'{a}nchez studied the structure of
Killing vector fields on a generalized Robertson-Walker
space-time. He obtained necessary and sufficient conditions for a
vector field to be Killing on generalized Robertson-Walker
space-times and gave a characterization of them as well as an
explicit list for the globally hyperbolic case.

After this brief explanations of some of the major works in the
geometry of warped products, especially in the geometry of standard
static space-times, we will provide an outline of the paper below.

In {\it Section 2}, we study Killing vector fields of standard
static space-times. Firstly we show necessary and sufficient
conditions for a vector field of the form $h \partial_t + V$ to be a
conformal Killing (see {\it Proposition \ref{con-1}}).

\noindent Then adapting the techniques of S\'{a}nchez in \cite{mSK} to
standard static space-times $M=I_f \times F$, we give the
structure of a generic Killing vector field on $M$ in the central
result of this section (see {\it Theorem \ref{thm:Killing ssst}}).
Essentially, we reduce the problem to the study of a parametric
system of partial differential equations (involving the Hessian) on
the Riemannian part $(F, g_F)$.

\noindent By studying on the latter system (see \eqref{eq:pb
Killing 1} and \eqref{eq:3eqnst 4}) and applying the well known
results about the solutions $(\nu, u)$ of a weighted elliptic
problem, with $w \in C^\infty_{>0}$,
\begin{equation*}\label{eq1:}
    -\Delta_{g_F} u = \nu w
    u \, \textrm{ on } (F,g_F).
\end{equation*}
on a compact Riemannian manifold without boundary $(F,g_F)$,
we characterize the Killing vector fields on a standard
static space-time with \textit{compact} Riemannian part in
{\it Theorem \ref{thm:killing compact fiber}}. More explicitly,
if $(F,g_F)$ is a compact Riemannian manifold, then the set of
Killing vector fields on a standard static space time $I _f\times F$ is
$$\{a
\partial_t + \tilde{K}| \, a \in \mathbb{R}, \tilde{K} \textrm{ is a
Killing vector field on } (F,g_F) \textrm{ and } \tilde{K}(f)=0
\}.$$

In Remark, \ref{rem:sharipov2007} we show that a relation between
the result mentioned above and an article (see \cite{Sharipov2007})
of R. A. Sharipov about Killing vector fields of a closed homogeneous
and isotropic universe.

In {\it Section 3}, as an application of our results in {\it Section 2}
we consider the question of the existence of non-rotating Killing
vector fields on a standard static space-time where the Riemannian
part is compact and simply connected.

In \textit{Appendix}, we provide an equivalent expression of a
general Killing vector field on a standard static space-time and
an application of our results to the well known case of the
Minkowski space-time.

\section{Killing Vector Fields}

We begin by stating the formal definition of a standard static
space-time and then recall some elementary definitions and facts
(see Section 3 of \cite{mSK}). But first, we want to emphasize
some important notational issues. Throughout the paper $I$ will
denote an open connected interval of the form $I=(t_1,t_2)$ in
$\mathbb R,$ where $-\infty \leq t_1 < t_2 \leq \infty.$ Moreover,
any underlying manifold is assumed to be connected. Finally, on an
arbitrary differentiable manifold $N$, $C^{\infty}_{>0}(N)$
denotes the set of all strictly positive $C^{\infty}$ functions
defined on $N$ and $\mathfrak X(N)$ will denote the
$C^{\infty}(N)-$module of smooth vector fields on $N$.

\begin{dfn} \label{dfna} Let $(F,g_F)$ be an $s$-dimensional
Riemannian manifold and $f \colon F \to (0,\infty)$ be a smooth
function. Then $n(=s+1)$-dimensional product manifold $I \times
F$ furnished with the metric tensor $g=-f^2{\rm d}t^2 \oplus g_F$
is called a standard static space-time and is denoted by
$I _f \! \times F$, where ${\rm d}t^2$ is the Euclidean metric
tensor on $I.$
\end{dfn}

We will now define Killing and conformal-Killing vector fields on
an arbitrary pseudo-Riemannian manifold. Let $(\overline M,
\overline g)$ be an ${\overline n}$-dimensional pseudo-Riemannian
manifold and $X \in \mathfrak X(\overline M)$ be a vector field on
$\overline M.$ Then

\begin{itemize}
\item $X$ is said to be Killing if ${\rm L}_X \overline g = 0$,

\item $X$ is said to be conformal-Killing if there exists a smooth
function $\sigma \colon \overline M \to \mathbb R$ such that ${\rm
L}_X \overline g = 2 \sigma \overline g$,
\end{itemize}
where ${\rm L}_X$ denotes the Lie derivative with respect to $X$.
Moreover, for any $Y$ and $Z$ in $\mathfrak X(\overline M),$ we
have the following identity (see \cite[p.162 and p.61]{ON})

\begin{equation}\label{eq:Lie deriv 1}
    {\rm L}_X \overline g(Y,Z) = \overline g(\nabla_Y X,Z) + \overline
g(Y,\nabla_Z X).
\end{equation}


\begin{rem} \label{rem:Lie deriv}
On $(I,g_I=\pm {\rm d}t^2)$ any vector field is conformal Killing.
Indeed, if $X$ is a vector field on $(I,g_I)$, then $X$ can be
expressed as $X = h \partial_t$ for some smooth function
$h \in \mathcal{C}^\infty(I)$. Hence, ${\rm L}_X g_I=2 \sigma g_I$
with $\sigma=h^\prime$.
\end{rem}

We will now state a simple result which will be useful in our
study (see \cite{mSK}, \cite{DWP} and page 126 of \cite{PHD}).

\begin{prp} \label{kill-m} Let $M=I_f \times F$ be a standard
static space-time with the metric $g=f^2 g_I \oplus g_F$, where
$g_I=-{\rm d}t^2$. Suppose that $X,Y,Z \in \mathfrak X(I)$ and
$V,W,U \in \mathfrak X(F).$ Then
\begin{eqnarray*} {\rm L}_{X+V}g(Y+W,Z+U) & = & f^2 {\rm L}^I_X
g_I(Y,Z) + 2f V(f)g_I(Y,Z) \\
& + & {\rm L}^F_V g_F(W,U)
\end{eqnarray*}
\end{prp}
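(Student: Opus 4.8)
The plan is to evaluate the left-hand side through the connection formula \eqref{eq:Lie deriv 1}, which expresses ${\rm L}_{X+V}g(Y+W,Z+U)$ as $g(\nabla_{Y+W}(X+V),Z+U)+g(Y+W,\nabla_{Z+U}(X+V))$, and then to compute each covariant derivative using the Levi-Civita connection of the warped product. The essential observation is that $M=I_f\times F$ is a warped product with base $(F,g_F)$, fiber $(I,g_I)$ and warping function $f$, so the standard warped-product connection formulas apply with the roles of base and fiber dictated by the metric $g=f^2 g_I\oplus g_F$. Concretely, for $V,W\in\mathfrak X(F)$ one has $\nabla_W V=\nabla^F_W V$; for $V\in\mathfrak X(F)$ and $X\in\mathfrak X(I)$ one has $\nabla_V X=\nabla_X V=(V(f)/f)X$; and for $X,Y\in\mathfrak X(I)$ one has $\nabla_Y X=\nabla^I_Y X-(g(X,Y)/f)\grad f$, where $\grad f$ is taken in $(F,g_F)$.

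First I would expand $\nabla_{Y+W}(X+V)=\nabla_Y X+\nabla_Y V+\nabla_W X+\nabla_W V$ using these four rules, separate its fiber component from its $F$-component, and then pair the result against $Z+U$. Because the splitting $g=f^2 g_I\oplus g_F$ is orthogonal, only fiber-with-fiber and $F$-with-$F$ pairings survive, and $g(X,Y)=f^2 g_I(X,Y)$ turns the second-fundamental-form term into $-f\,g_I(X,Y)\,U(f)$ after pairing with $U$. Carrying out the symmetric computation for $\nabla_{Z+U}(X+V)$ paired with $Y+W$ and adding the two, I expect the terms $f\,W(f)\,g_I(X,Z)$ and $f\,U(f)\,g_I(X,Y)$ produced by the mixed derivatives $\nabla_W X$ and $\nabla_U X$ to cancel exactly against the second-fundamental-form contributions. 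What remains should regroup into $f^2\big[g_I(\nabla^I_Y X,Z)+g_I(\nabla^I_Z X,Y)\big]$, the double $V(f)$ term giving $2f\,V(f)\,g_I(Y,Z)$, and $g_F(\nabla^F_W V,U)+g_F(\nabla^F_U V,W)$.

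To finish, I would apply \eqref{eq:Lie deriv 1} in each factor separately: on $(I,g_I)$ it identifies $g_I(\nabla^I_Y X,Z)+g_I(\nabla^I_Z X,Y)$ with $({\rm L}^I_X g_I)(Y,Z)$, and on $(F,g_F)$ it identifies $g_F(\nabla^F_W V,U)+g_F(\nabla^F_U V,W)$ with $({\rm L}^F_V g_F)(W,U)$, yielding the claimed identity. The main obstacle is bookkeeping rather than conceptual: one must correctly assign the base/fiber roles (the warping function lives on $F$, so $F$ is the base), keep track of which functions are annihilated by which fields (for instance $X(f)=0$ while $V$ annihilates any pullback from $I$), and verify that the mixed terms cancel. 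An equally clean alternative, avoiding the connection altogether, is to use the bracket form $({\rm L}_{X+V}g)(Y+W,Z+U)=(X+V)\,g(Y+W,Z+U)-g([X+V,Y+W],Z+U)-g(Y+W,[X+V,Z+U])$ together with the vanishing of mixed brackets of lifts, which makes the same three terms appear directly.
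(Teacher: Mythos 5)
Your computation is correct: with the roles of base and fiber correctly assigned ($F$ is the base carrying $f$, $I$ is the warped fiber), the O'Neill connection formulas give exactly the cancellation you predict between the mixed terms $f\,W(f)\,g_I(X,Z)$, $f\,U(f)\,g_I(X,Y)$ and the second-fundamental-form contributions, leaving the three stated terms. The paper itself states this proposition without proof, deferring to \cite{mSK}, \cite{DWP} and \cite{PHD}, and your argument (as well as the bracket-based alternative you sketch, which avoids the connection entirely) is the standard derivation found there.
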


Note that if $h \colon I \to \mathbb R$ is smooth and $Y,Z \in
\mathfrak X(I),$ then

\begin{equation}\label{eq:Lie deriv 2}
{\rm L}_{h \partial_t} g_I(Y,Z) = Y(h)g_I(Z,\partial_t) + Z(h) g_I
(Y, \partial_t).
\end{equation}

By combining the previous statements we can prove the following:

\begin{prp} \label{con-1} Let $M=I_f \times F$ be a standard
static space-time with the metric $g=-f^2{\rm d}t^2 \oplus g_F.$
Suppose that $h \colon I \to \mathbb R$ is smooth and $V$ is a
vector field on $F.$ Then $h
\partial_t + V$ is a conformal-Killing vector
field on $M$ with $\sigma \in C^{\infty}(M)$ if and only if the
followings are satisfied:
\begin{enumerate}
\item $V$ is conformal-Killing on $F$ with $\sigma \in
C^{\infty}(F),$

\item $h$ is affine, i.e, there exist real numbers $\mu$ and $\nu$
such that $h(t)=\mu t + \nu$ for any $t \in I,$

\item $V(f) = (\sigma-\mu) f.$
\end{enumerate}
\end{prp}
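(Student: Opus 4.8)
The plan is to feed the conformal-Killing condition $\mathrm{L}_{h\partial_t+V}g = 2\sigma g$ through the decomposition formula of Proposition \ref{kill-m} and then separate the base and fiber directions. Writing $X = h\partial_t$, for arbitrary $Y,Z \in \mathfrak X(I)$ and $W,U \in \mathfrak X(F)$ Proposition \ref{kill-m} gives
$$\mathrm{L}_{h\partial_t+V}g(Y+W,Z+U) = f^2\,\mathrm{L}^I_{h\partial_t}g_I(Y,Z) + 2fV(f)\,g_I(Y,Z) + \mathrm{L}^F_V g_F(W,U),$$
whereas the right-hand side of the defining equation evaluates to $2\sigma\big(f^2 g_I(Y,Z) + g_F(W,U)\big)$. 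The crucial structural observation is that this expression carries no cross term mixing $\mathfrak X(I)$ with $\mathfrak X(F)$; hence $h\partial_t + V$ is conformal-Killing with factor $\sigma$ if and only if the base block and the fiber block match separately.

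For the forward direction I would first read off the fiber block by setting $Y=Z=0$, obtaining $\mathrm{L}^F_V g_F(W,U) = 2\sigma\,g_F(W,U)$ for all $W,U$. Since the left-hand side is a tensor on $F$ and hence independent of $t$, evaluating on a $W$ with $g_F(W,W)>0$ forces $\partial_t\sigma = 0$; so $\sigma$ descends to a function on $F$ and condition (1) holds. Next I would extract the base block by setting $W=U=0$ and taking $Y=Z=\partial_t$; using \eqref{eq:Lie deriv 2} to compute $\mathrm{L}^I_{h\partial_t}g_I(\partial_t,\partial_t) = -2h'$ and recalling $g_I(\partial_t,\partial_t) = -1$, this collapses, after dividing by $-2f$, to the single scalar identity $f h' + V(f) = \sigma f$.

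The step I expect to be the heart of the argument is the separation-of-variables conclusion drawn from this identity. Rearranged as $h'(t) = \sigma - V(f)/f$, the left side depends only on $t \in I$ while the right side depends only on $p \in F$; a function of $t$ that equals a function of $p$ throughout the connected product $I \times F$ must be constant, say $\mu$. This yields $h(t) = \mu t + \nu$, i.e. condition (2), and substituting back gives $V(f) = (\sigma-\mu)f$, i.e. condition (3).

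For the converse I would simply substitute conditions (1)--(3) into the formula of Proposition \ref{kill-m}. Condition (2) makes $\mathrm{L}^I_{h\partial_t}g_I = 2\mu g_I$ (an affine $h$ has constant derivative $\mu$, cf. Remark \ref{rem:Lie deriv}), condition (3) rewrites $2fV(f)$ as $2(\sigma-\mu)f^2$, and condition (1) supplies $\mathrm{L}^F_V g_F = 2\sigma g_F$; combining the base terms $2\mu f^2 g_I + 2(\sigma-\mu)f^2 g_I = 2\sigma f^2 g_I$ with the fiber term recovers $2\sigma g$ exactly. This is a routine verification once the forward direction has fixed the right bookkeeping.
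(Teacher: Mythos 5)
Your proposal is correct and follows essentially the same route as the paper's proof: restricting the identity of Proposition \ref{kill-m} to the fiber block ($Y=Z=0$) to obtain (1), to the base block ($W=U=0$, via \eqref{eq:Lie deriv 2} and Remark \ref{rem:Lie deriv}) to obtain $(\sigma-h')f=V(f)$, and then separating variables to force $h'=\mu$ constant, with the converse by direct substitution. Your write-up merely fills in the computations the paper leaves implicit.
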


\begin{proof}(1) follows from {\it Proposition \ref{kill-m}}
by taking $Y=Z=0$ and a separation of variables argument. On the
other hand, from {\it Proposition \ref{kill-m}} with $W=U=0$,
\eqref{eq:Lie deriv 2} and Remark \ref{rem:Lie deriv}, we have
$(\sigma - h^\prime)f=V(f)$. Hence, again by separation of
variables, $h^\prime$ is constant and then (2) is obtained. Thus,
(3) is clear.

By using similar computations described as above, the converse
turns out to be a consequence of the decomposition of any vector
field on $M,$ i.e., as a sum of its horizontal and vertical parts.
\end{proof}

\begin{cor}\label{cor:simple Killing}
Let $M=I _f\times F$ be a standard static space-time with the
metric $g=-f^2{\rm d}t^2 \oplus g_F.$ Suppose that $h \colon I \to
\mathbb R$ is smooth and $V$ is a vector field on $F$. Then $h
\partial_t + V$ is a Killing vector field on $M$ if and only if
the followings are satisfied:
\begin{enumerate}
\item $V$ is Killing on $F$,

\item $h$ is affine, i.e, there exist real numbers $\mu$ and $\nu$
such that $h(t)=\mu t + \nu$ for any $t \in I$,

\item $V(f) = - \mu f$.
\end{enumerate}
\end{cor}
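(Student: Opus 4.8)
The plan is to obtain the corollary as the special case $\sigma \equiv 0$ of \emph{Proposition \ref{con-1}}. By definition a vector field $X$ is Killing precisely when $L_X g = 0$, which is the same as saying that $X$ is conformal-Killing with conformal factor $\sigma \equiv 0 \in C^\infty(M)$. Thus I would simply invoke \emph{Proposition \ref{con-1}} with this choice of $\sigma$ and read off the three resulting conditions.

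Concretely, substituting $\sigma = 0$ into the three equivalent conditions of \emph{Proposition \ref{con-1}}: condition (1) becomes ``$V$ is conformal-Killing on $F$ with factor $0$'', i.e. $V$ is Killing on $F$; condition (2), the affineness of $h$, is unchanged; and condition (3), $V(f) = (\sigma - \mu)f$, collapses to $V(f) = -\mu f$. These are exactly items (1)--(3) of the corollary, and the equivalence is inherited verbatim from the parent proposition.

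Should one prefer a self-contained argument, I would run the same computation that underlies \emph{Proposition \ref{con-1}} but set the Lie derivative equal to zero from the start. Using \emph{Proposition \ref{kill-m}} to expand $L_{h \partial_t + V} g(Y+W, Z+U)$, the slice $W=U=0$ together with \eqref{eq:Lie deriv 2} and the sign convention $g_I(\partial_t,\partial_t) = -1$ yields the single scalar equation $f\bigl(f h' + V(f)\bigr) = 0$; since $f > 0$ this reads $V(f) = -h' f$. A separation-of-variables step (the left-hand side depends only on $F$, while $h'$ depends only on $t$) forces $h'$ to be a constant $\mu$, giving (2) and (3) simultaneously. The slice $Y=Z=0$ gives $L^F_V g_F = 0$, i.e. (1). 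For the converse one substitutes (1)--(3) back into \emph{Proposition \ref{kill-m}}, whereupon the two contributions $-2\mu f^2$ and $2\mu f^2$ cancel and the fiber term vanishes by (1).

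I do not expect any genuine obstacle here: the statement is a routine specialization. The only points demanding a little care are the sign convention $g_I = -{\rm d}t^2$ (so that $g_I(\partial_t,\partial_t) = -1$ propagates the correct sign into condition (3)) and the observation that the separation-of-variables argument, already carried out for \emph{Proposition \ref{con-1}}, rules out any dependence of $h'$ on $t$.
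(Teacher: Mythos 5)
Your proposal is correct and follows exactly the paper's own proof, which consists of the single line ``It is sufficient to apply Proposition \ref{con-1} with $\sigma \equiv 0$.'' The additional self-contained computation you sketch is a faithful unwinding of the same argument and raises no issues.
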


\begin{proof}
It is sufficient to apply Proposition \ref{con-1} with $\sigma
\equiv 0$.
\end{proof}

\begin{notation}\label{notation: It0} Let $h$ be a
given a continuous function defined on a real interval $I$. If
there exists a point $t_0 \in I$ such that $h(t_0)\neq 0$, then
$I_{t_0}$ denotes the connected component of $\{t \in I: h(t)\neq
0\}$ such that $t_0 \in I_{t_0}$.
\end{notation}

In what follows, we will make use part of the arguments given in
\cite{mSK} (see also \cite{Carot-da Costa 93}) about the structure
of Killing and conformal-Killing vector fields in warped products.
In \cite{mSK} by applying such arguments, M. S\'{a}nchez obtains
full characterizations of the Killing and conformal-Killing vector
fields in generalized Robertson-Walker space-times. In order to be
more explanatory, we begin by adapting his procedure to our set of
frame.

\bigskip

Let $(B, g_B)$ and $(F, g_F)$ be two semi-Riemannian manifolds
with dimensions $r$ and $s > 0$ respectively, and also let $f \in
C_{>0}^{\infty}(F)$ be. Consider the warped metric $g= f^2g_B +
g_F$ on $B \times F$. Given a vector field $Z$ on $B \times F$, we
will write $Z = Z_B + Z_F$ with $Z_B = ({\pi_B}_*(Z), 0)$ and $Z_F
= (0,{\pi_F}_*(Z))$, the projections onto the natural foliations
($B_q = B \times \{q\}$, $q \in F$ and $F_p=\{p\} \times F, p\in
B$). Any covariant or contravariant tensor field $\omega$ on one
of the factors ($B$ or $F$) induces naturally a tensor field on $B
\times F$ (i.e. the lift), which either will be denoted by the
same symbol $\omega$, or else (when necessary) will be
distinguished by putting a bar on it, i.e, $\overline{\omega}$.

\bigskip

\begin{prp}\label{prp:sanchez99}
(see Proposition 3.6 in \cite{mSK}) If $K$ is a Killing vector
field on $M=B_f \times F$, then $K_B$ is a conformal Killing vector
field on $B_q$ for any $q \in F$ and $K_F$ is a Killing vector field
on $F_p$ for any $p \in B$.
\end{prp}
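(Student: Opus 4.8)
The plan is to work directly from the defining identity for Killing vector fields, namely that $K$ is Killing on $M = B_f \times F$ means ${\rm L}_K g = 0$, and to test this against suitably chosen pairs of vector fields tangent to the two foliations. Concretely, I would evaluate the Lie derivative condition on vectors $Y,Z$ that are both horizontal (tangent to the leaves $B_q = B \times \{q\}$) to extract information about $K_B$, and then on vectors that are both vertical (tangent to $F_p = \{p\} \times F$) to extract information about $K_F$. The key computational tool will be identity \eqref{eq:Lie deriv 1}, together with the warped-product structure of the metric $g = f^2 g_B + g_F$, which allows me to relate the ambient Levi-Civita connection $\nabla$ to the connections on the factors.

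First I would decompose $K = K_B + K_F$ and compute ${\rm L}_K g(Y,Z)$ for $Y,Z \in \mathfrak X(B)$ lifted horizontally. Using $0 = {\rm L}_K g(Y,Z) = g(\nabla_Y K, Z) + g(Y, \nabla_Z K)$ and splitting $K$ into its two parts, the terms involving $K_F$ should, after using the warped-product connection formulas (in particular that the normal component of $\nabla_Y Z$ for horizontal $Y,Z$ is governed by $\grad f$ and the second fundamental form of the leaves), collapse into a multiple of $g_B(Y,Z)$. This is exactly what a conformal factor looks like: I expect to arrive at ${\rm L}_{K_B}^{B} g_B(Y,Z) = 2\sigma_B \, g_B(Y,Z)$ for some function $\sigma_B$ involving $f$ and $K_F(f)$, proving that $K_B$ is conformal Killing on each leaf $B_q$. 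The point is that the failure of $K_B$ to be Killing on the leaf is precisely compensated by the warping, which contributes a pure-trace (conformal) term rather than a traceless one.

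Next I would repeat the computation with $W,U \in \mathfrak X(F)$ lifted vertically. Here the metric restricted to the fiber $F_p$ is simply $g_F$, with no warping factor, and the second fundamental form of the fibers vanishes (the fibers are totally geodesic in a warped product of this type, since the warping acts on the base factor's metric). Consequently the cross terms coming from $K_B$ should drop out entirely, leaving ${\rm L}_{K_F}^{F} g_F(W,U) = 0$, which says $K_F$ is genuinely Killing on $F_p$. I would lean on Proposition \ref{kill-m} as the packaged version of these splitting computations, so that much of the connection bookkeeping is already available.

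The main obstacle I anticipate is the careful handling of the mixed and normal components of the connection in the warped product, i.e.\ correctly identifying which terms survive the projection onto horizontal versus vertical directions and verifying that the horizontal computation really produces a \emph{pure conformal} term rather than an arbitrary symmetric tensor. The asymmetry between the two conclusions—conformal Killing on $B_q$ but honestly Killing on $F_p$—is a direct reflection of the asymmetry in the metric $g = f^2 g_B + g_F$, where only the base factor carries the warping; making this precise, rather than merely plausible, is where the real work lies, and I would organize it by treating the horizontal-horizontal, vertical-vertical, and mixed evaluations of ${\rm L}_K g$ as three separate cases.
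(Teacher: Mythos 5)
The paper does not actually prove this proposition: it is imported verbatim from S\'anchez (Proposition 3.6 of \cite{mSK}) and used as a black box, so there is no in-paper argument to compare against. Your plan is the standard proof of that result and it is sound: testing ${\rm L}_K g=0$ against horizontal--horizontal, vertical--vertical, and mixed pairs of lifted vector fields, and observing that for horizontal $Y,Z$ the only contribution of $K_F$ is the term $K_F(f^2)\,g_B(Y,Z)$ (a pure multiple of $g_B$, giving the conformal factor $\sigma=-K_F(f)/f$ on each leaf $B_q$), while for vertical $W,U$ the contributions of $K_B$ vanish because $g_F(W,U)$ is lifted from $F$ and the two distributions are $g$-orthogonal. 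One caveat you should address explicitly: Proposition \ref{kill-m} is stated only for \emph{lifted} fields $X\in\mathfrak X(I)$, $V\in\mathfrak X(F)$, whereas the projections $K_B$, $K_F$ of a general Killing field are not lifts (their components depend on both factors), so you cannot simply invoke that proposition as ``the packaged version'' of the computation. You must redo the splitting for a general $K$, taking the \emph{test} fields $Y,Z,W,U$ to be lifts (which suffices, since lifts span every tangent space) and then restricting the resulting identity to a fixed leaf $B_q$ or fiber $F_p$; done this way the bracket computation $[K_B,W]$ horizontal, $K_B(g_F(W,U))=0$, etc., goes through and no connection or second-fundamental-form bookkeeping is actually needed, so the ``main obstacle'' you anticipate can be avoided entirely by computing ${\rm L}_Kg$ via Lie brackets rather than via \eqref{eq:Lie deriv 1}.
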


\bigskip

Suppose that $\{C_{\overline a} \in \mathfrak X(B) | \, \overline
a = 1,\cdots, \overline r \}$ is a basis for the set of all
conformal-Killing vector fields on $B$ and $\{K_{\overline b} \in
\mathfrak X(F) | \, \overline b = 1,\cdots, \overline s \}$ is a
basis for the set of all Killing vector fields on $F.$

According to \cite{mSK} (see also \cite[Sections 7 and 8]{B-O}),
Killing vector fields on a warped product of the form $M=B_f \times F$
with the metric $g=f^2 g_B \oplus g_F$ can be given as

\begin{equation} \label{form}
K=\psi^{\overline a} C_{\overline a} + \phi^{\overline b}
K_{\overline b},
\end{equation}
where $\phi^{\overline b} \in \mathcal C^\infty(B)$ and
$\psi^{\overline a} \in \mathcal C^\infty(F).$

Define $K_B=\psi^{\overline a} C_{\overline a}$ and
$K_F=\phi^{\overline b} K_{\overline b}$. Moreover,
$\hat{K}_{\overline b}=g_F(K_{\overline b},\cdot)$ and
$\hat{C}_{\overline a}=g_B(C_{\overline a},\cdot).$

Then Proposition 3.8 of \cite{mSK} implies that a vector field $K$
of the form (\ref{form}) is Killing on $B_f \times F$ if and
only if the following equations are satisfied:
\begin{equation} \label{2eqn}
\left\{
\begin{array}{rcl}
\psi^{\overline a} \sigma_{\overline a} + K_F(\theta)& = & 0 \\
 {\rm d} \phi^{\overline b} \otimes \hat{K}_{\overline b} +
\hat{C}_{\overline a} \otimes f^2 {\rm d} \psi^{\overline a} & = &
0,
\end{array}
\right.
\end{equation}
where $C_{\overline a}$ is a conformal-Killing vector field on $B$
with $\sigma_{\overline a} \in \mathcal C^\infty(B)$, i.e. ${\rm
L}_{C_{\overline a}}^B g_B= 2\sigma_{\overline a}g_B$ and
$\theta=\ln f$.

\medskip

Let $(F,g_F)$ be a Riemannian manifold of dimension $s$ admitting
at least one \textit{nonzero} Killing vector field on $(F,g_F)$
and $f \in C^\infty_{>0}(F)$. Thus, there exists a basis
$\{K_{\overline b} \in \mathfrak X(F) | \, \overline b = 1,\cdots,
\overline s \}$ for the set of Killing vector fields on $F$.

Let $I$ be an open interval of the form $I=(t_1,t_2)$ in $\mathbb R,$
where $-\infty \leq t_1 < t_2 \leq \infty$ furnished with the metric
$-{\rm d}t^2$. Recalling Remark \ref{rem:Lie deriv}, we
observe that the dimension of the set of conformal Killing vector
fields on $(I,-{\rm d}t^2)$ is infinite so that one cannot apply directly
the above procedure due to M. S\'{a}nchez before observing that the
form of conformal Killing vector fields on $(I,-{\rm d}t^2)$ is
explicit. Indeed, it is easy to prove that all the computations are
valid by considering the form of any conformal Killing vector field
on $(I,-{\rm d}t^2)$, namely $h \partial_t$ where $h \in C^\infty(I)$,
instead of the finite basis of conformal Killing vector fields in the
procedure of M. S\'{a}nchez .

\medskip

If we apply the latter adapted technique to the standard static
space-time $M=I _f\times F$ with the metric $g=f^2 g_I \oplus g_F$
where $g_I=-{\rm d}t^2$, then a vector field $K \in  \mathfrak
X(M)$ is a Killing vector field if and only if $K$
can be written in the form
\begin{equation}\label{eq:Killing structure}
   K= \psi h \partial_t +
\phi^{\overline b} K_{\overline b},
\end{equation}
where
$h$ and $\phi^{\overline b} \in C^\infty(I)$ for any $\overline b
\in\{1, \cdots, \overline m\}$ and $\psi \in C^\infty(F)$
satisfies the following version of System \eqref{2eqn}
\begin{equation} \label{2eqnst}
\left\{
\begin{array}{rcl}
 h^\prime \psi+ \phi^{\overline
b} K_{\overline b}(\ln f)& = & 0 \\
{\rm d} \phi^{\overline b} \otimes g_F(K_{\overline b},\cdot) +
g_I(h \partial_t, \cdot) \otimes f^2 {\rm d} \psi & = & 0.
\end{array}
\right.
\end{equation}

Thus, in order to study Killing vector fields on standard static
space-times we will concentrate our attention on the existence of
solutions for System \eqref{2eqnst}.

Since $\displaystyle{{\rm d} \phi^{\overline b} = (\phi^{\overline
b})^\prime {\rm d}t}$ with $\phi^{\overline b} \in \mathcal
C^\infty(I)$ and $g_I(h \partial_t, \cdot)=-h {\rm d}t$,
\eqref{2eqnst} is equivalent to
\begin{equation} \label{3eqnst}
\left\{
\begin{array}{rcl}
 h^\prime \psi+ \phi^{\overline
b} K_{\overline b}(\ln f)& = & 0 \\
(\phi^{\overline b})^\prime {\rm d}t \otimes g_F(K_{\overline
b},\cdot) & = & h {\rm d}t \otimes f^2 {\rm d} \psi ,
\end{array}
\right.
\end{equation}
and by raising indices in the second equation, it is also
equivalent to
\begin{equation} \label{3eqnst vector field}
\left\{
\begin{array}{lrcl}
(a)\qquad & h^\prime \psi+ \phi^{\overline
b} K_{\overline b}(\ln f)& = & 0 \\
(b)\qquad &(\phi^{\overline b})^\prime \partial_t \otimes
K_{\overline b} & = & h \partial_t \otimes f^2 \grad_F \psi .
\end{array}
\right.
\end{equation}

First of all, we will apply a separation of variables procedure to
the second equation in \eqref{3eqnst vector field}. Recall that
$\{K_{\overline b}\}_{1 \le {\overline b} \le {\overline m}}$ is a
basis of the Killing vector fields in $(F,g_F)$. Thus by simple
computations, it is possible to show that $(\ref{3eqnst vector
field}-b)$ implies the following equation for $(\phi^{\overline
b})^\prime$
\begin{equation}\label{eq:phi form 1}
\begin{array}{rcl}
   (\phi^{\overline b})^\prime(t) &=&
   [h(t)-h(t_0)]\gamma^{\overline b}+(\phi^{\overline b})^\prime (t_0)
   ,\\
   &=& \gamma^{\overline b} h(t) + \delta^{\overline
   b},\\
\end{array}
\end{equation}
where $\gamma^{\overline b}$ and $\delta^{\overline b}$
$(=-h(t_0)\gamma^{\overline b}+(\phi^{\overline b})^\prime (t_0),
\textrm{ for some fixed } t_0 \textrm{ in } I )$ are real
constants for any choice of ${\overline b}$ where $1 \le
{\overline b}\le {\overline m}$.

The solutions of the first order ordinary differential equation in
\eqref{eq:phi form 1} are given by
\begin{equation}\label{eq:phi form 2}
\phi^{\overline b}(t)=\gamma^{\overline b} \int_{t_0}^t h(s)ds +
\delta^{\overline b}t + \eta^{\overline b},
\end{equation}
where $\eta^{\overline b}$ are constant for all $\overline b$.

By introducing \eqref{eq:phi form 1} in $(\ref{3eqnst vector
field}-b)$, the last equation takes the following equivalent form:
\begin{equation}\label{eq:3eqnst vector field-b-equiv}
    h\partial_t \otimes[\gamma^{\overline b} K_{\overline b} - f^2 \grad_F
    \psi] = \partial_t \otimes [- \delta^{\overline b} K_{\overline b}].
\end{equation}
Thus, by recalling again that $\{K_{\overline b}\}_{1 \le
{\overline b} \le {\overline m}}$ is a basis of the Killing vector
fields in $(F,g_F)$, there results three different cases, namely.
\begin{description}
    \item[$h \equiv 0$]By \eqref{eq:phi form 2},
    \eqref{3eqnst vector field} takes the form
\begin{equation} \label{3eqnst vector field h=0}
\left\{
\begin{array}{lrcl}
(a)\qquad & (\delta^{\overline b}t + \eta^{\overline b}) K_{\overline b}(\ln f)& = & 0 \\
(b)\qquad &\delta^{\overline b} K_{\overline b} & = & 0 .
\end{array}
\right.
\end{equation}
Since $\{K_{\overline b}\}_{1 \le {\overline b} \le {\overline m}}$
is a basis, $(\ref{3eqnst vector field h=0}-b)$ implies that for any
$\overline b,$ we have $\delta^{\overline b}=0$.

Thus $K = \eta^{\overline b} K_{\overline b}$, so that $K$ is a
linear combination of the elements in the basis $\{K_{\overline
b}\}_{1 \le {\overline b} \le {\overline m}}$ and consequently, it
is a Killing vector field on $(F,g_F)$.

    \item[$h \equiv h_0 \neq 0 \,\, constant$] By  \eqref{eq:phi form 2},
    \eqref{3eqnst vector field} takes the form

\begin{equation} \label{3eqnst vector field h=h0}
\left\{
\begin{array}{lrcl}
(a)\qquad & \phi^{\overline b} K_{\overline b}(\ln f)& = & 0 \\
(b)\qquad &\displaystyle \frac{1}{h_0}\left(\gamma^{\overline b}
h_0 + \delta^{\overline b}\right) K_{\overline b} & = & f^2
\grad_F \psi,
\end{array}
\right.
\end{equation}
where
\begin{equation}\label{eq:phi form 2 h0}
   \phi^{\overline b}(t)=\underbrace{(\gamma^{\overline b}
   h_0 + \delta^{\overline b})}_{h_0\tau^{\overline b}}t +
    \underbrace{(\eta^{\overline b} - \gamma^{\overline b}
    h_0t_0)}_{\omega^{\overline b}}.
\end{equation}

\noindent Note that in particular, Equation $(\ref{3eqnst vector
field h=h0}-b)$ implies that $f^2 \grad_F \psi$ is Killing on
$(F,g_F)$ and gives the coefficients of $f^2 \grad_F \psi$ with
respect to the basis $\{K_{\overline b}\}_{1 \le {\overline b} \le
{\overline m}}$. On the other hand, differentiating $(\ref{3eqnst
vector field h=h0}-a)$ with respect to $t$ and then considering
$(\ref{3eqnst vector field h=h0}-b)$, we obtain
\begin{equation*}\label{}
\begin{array}{rcl}
   0&=&(\gamma^{\overline b} h_0 + \delta^{\overline b})
   K_{\overline b}(\ln f) \\
   &=& h_0(f^2 \grad_F \psi )(\ln f) \\
   &=&  h_0 f g_F(\grad_F \psi,\grad_F
f).\\
\end{array}
\end{equation*}

\noindent Furthermore, \eqref{eq:phi form 2 h0} and $(\ref{3eqnst
vector field h=h0}-a)$ imply that
\begin{equation*}\label{}
   \omega^{\overline b}K_{\overline b}(\ln f)=0.
\end{equation*}
Hence, we proved that \eqref{3eqnst vector field}
%
%
suffices to the following:
\begin{equation}
\label{3eqnst vector field h=h0 2} \left\{
\begin{array}{l}
f \in C^\infty_{>0}(F),\psi \in C^\infty(F);\\
 f^{2} \grad_F \psi \textrm{ is a Killing vector field on } (F,g_F)
 \textrm{ with }\\
\textrm{ coefficients } \{\tau_{\overline b}\}_{1 \le {\overline
b} \le {\overline m}} \textrm{ relative to the basis }
\{K_{\overline b}\}_{1 \le
{\overline b} \le {\overline m}};\\
(f^{2} \grad_F \psi)(\ln f)  =  0; \\
\forall {\overline b}: \phi^{\overline b}(t)=h_0 \tau^{\overline
b}t+\omega^{\overline b} \textrm{ with } \omega^{\overline b}\in
\mathbb{R} : \omega^{\overline b}
K_{\overline b}(\ln f)=0.\\
\end{array}
\right.
\end{equation}
It is easy to prove that \eqref{3eqnst vector field}
%
%
is also necessary for \eqref{3eqnst vector field h=h0 2}.

\noindent Hence we proved that $K= \psi h_0 \partial_t +
\phi^{\overline b} K_{\overline b}$ is Killing if and only if
\eqref{3eqnst vector field h=h0 2} is satisfied.

    \item[$h\,\, nonconstant$] The procedure for this case is a
    generalization of the previous, namely.

    \noindent Since $h$ is nonconstant, we can take $t_0$ such that
    $h(t_0)\neq 0$ and we will work on the subinterval $I_{t_0}$
    (see \textit{Notation \ref{notation: It0}}).
%
%

\noindent First of all, note that by applying the separation of
variables method in \eqref{eq:3eqnst vector field-b-equiv}, the
\underline{non}-constancy of $h$ implies that
\begin{equation}\label{eq:h no const 1}
   \left\{
\begin{array}{l}
\gamma^{\overline b} K_{\overline b} - f^2 \grad_F
    \psi=0\\
\forall {\overline b}: \delta^{\overline b}=0 .\\
\end{array}
   \right.
\end{equation}
Thus, by \eqref{eq:phi form 2},
\begin{equation}\label{eq:phi form 2 h no const}
 \phi^{\overline b}(t)=\gamma^{\overline b} \int_{t_0}^t h(s)ds
 + \eta^{\overline b}.
\end{equation}

\noindent On the other hand, by differentiating $(\ref{3eqnst
vector field}-a)$ with respect to $t$ and then by considering
\eqref{eq:h no const 1}, we obtain
\begin{equation*}\label{}
    h^{\prime \prime}\psi + h (f^2 \grad_F \psi)(\ln f)=0.
\end{equation*}

\noindent Besides, by considering \eqref{eq:h no const 1},
\eqref{eq:phi form 2 h no const} and again $(\ref{3eqnst vector
field}-a)$ there results
\begin{equation*}\label{}
   \left[h^\prime -  \frac{h^{\prime \prime}}{h}
   \int_{t_0}^t h(s)ds \right]\psi +
   \eta^{\overline b}K_{\overline b}(\ln f)=0.
\end{equation*}

\noindent Thus, we proved that \eqref{3eqnst vector field} is
sufficient to
\begin{equation}
\label{3eqnst vector field h no const} \left\{
\begin{array}{l}
f \in C^\infty_{>0}(F),\psi \in C^\infty(F);\\
 f^{2} \grad_F \psi \textrm{ is a Killing vector field on }
 (F,g_F)\textrm{ with }\\
\textrm{coefficients } \{\tau_{\overline b}\}_{1 \le {\overline b}
\le {\overline m}} \, \textrm{ relative to the basis }
\{K_{\overline b}\}_{1 \le {\overline b} \le {\overline m}};\\
h^{\prime \prime}\psi + h (f^2 \grad_F \psi)(\ln f)=0;\\
\forall {\overline b}: \phi^{\overline b}(t)=\tau^{\overline
b}\displaystyle \int_{t_0}^t h(s)ds +\omega^{\overline b} \textrm{
with } \omega^{\overline b}\in \mathbb{R}:\\
\displaystyle \left[h^\prime -  \frac{h^{\prime \prime}}{h}
\int_{t_0}^t h(s)ds \right]\psi + \omega^{\overline b}
K_{\overline b}(\ln f)=0  \textrm{ on } I_{t_0}.
\end{array}
\right.
\end{equation}
It is easy to prove that \eqref{3eqnst vector field} is also a
necessary condition for \eqref{3eqnst vector field h no const}, on
an interval where $h$ does not take the zero value.


\noindent It is not difficult to show that if $\displaystyle
-\frac{h^{\prime \prime}}{h}$ is nonconstant, then $\psi \equiv 0$.
Thus,
\begin{equation}\label{eq:h non constant}
K= \phi^{\overline b} K_{\overline b}\textrm{ with
}\phi^{\overline b}(t)=\omega^{\overline b}\textrm{ and
}\omega^{\overline b} \in \mathbb{R}: \omega^{\overline b}
K_{\overline b}(\ln f)=0.
\end{equation}

\noindent On the other hand, if $\displaystyle -\frac{h^{\prime
\prime}}{h}=\nu$ is constant, the second statement of \eqref{3eqnst
vector field h no const}, namely
\begin{equation}\label{eq:5.22 2}
   h^{\prime \prime}\psi + h (f^2 \grad_F \psi)(\ln f)=0,
\end{equation}
results equivalent to
\begin{equation} \label{eq:3eqnst eq-a 1}
\left\{
\begin{array}{rcl}
 -h^{\prime \prime} & = & \nu h \\
(f^2 \grad_F \psi)(\ln f) & = & \nu \psi.
\end{array}
\right.
\end{equation}
Thus,
\begin{equation} \label{eq:tuning 3}
h(t)= \left\{
\begin{array}{lcl}
a e^{\sqrt{-\nu}\,t} + b e^{-\sqrt{-\nu}\,t}\, \,
& \textrm{ if } & \nu \neq 0 \\
a t + b \, \,  &\textrm{ if }&  \nu = 0,\\
\end{array}
\right.
\end{equation}
where $a$ and $b$ are real constants.


\noindent

Hence, by \eqref{3eqnst vector field h no const}, \eqref{eq:h non
constant} and \eqref{eq:tuning 3}, the problem \eqref{3eqnst} is
equivalent to: (see Notation \ref{notation: It0} for the
definition of the interval $I_{t_0}$ relative to the function $h$)
%
\begin{equation}\label{eq:3eqnst 2 h no 0}
\left\{
\begin{array}{l}
(a)\left\{
\begin{array}{l}
f \in C^\infty_{>0}(F), \psi \equiv 0; \\
\phi^{\overline b}(t)=\omega^{\overline b} \textrm{ on } I_{t_0}
\textrm{ where }\omega^{\overline b} \in
\mathbb{R}: \omega^{\overline b} K_{\overline b}(\ln f)=0\\
\end{array}
\right.
%
\\
\textrm{or }\\
%
(b)\left\{
\begin{array}{l}
f \in C^\infty_{>0}(F),\psi \in C^\infty(F);\\
 f^{2} \grad_F \psi \textrm{ is a Killing vector field on }
(F,g_F)\\
\textrm{with } \textrm{coefficients } \{\tau_{\overline b}\}_{1
\le {\overline b} \le {\overline m}} \, \textrm{ relative to the } \\
\textrm{basis } \{K_{\overline b}\}_{1 \le
{\overline b} \le {\overline m}};\\
(f^{2} \grad_F \psi)(\ln f)  =  \nu \psi \textrm{ where }
\nu \, \textrm{is constant} ;\\
h \textrm{ is given in } \eqref{eq:tuning 3};\\
\forall {\overline b}: \phi^{\overline b}(t)=\tau^{\overline
b}\displaystyle \int_{t_0}^t h(s)ds +\omega^{\overline b} \textrm{
with }
\omega^{\overline b}\in \mathbb{R}:\\
\displaystyle h^\prime (t_0) \psi + \omega^{\overline
b}K_{\overline b}(\ln f)=0 \textrm{ on }
I_{t_0}.\\
\end{array}
\right.
\\
\end{array}
\right.
\end{equation}
\end{description}

\noindent It is easy to prove that if a set of functions $h$, $\psi$
and $\{\phi_{\overline b}\}_{1 \le \overline b \le \overline m}$,
satisfy \eqref{eq:3eqnst 2 h no 0} with a real interval $I$ instead
of $I_{t_0}$, then \eqref{3eqnst} is verified, that is, the vector
field given by \eqref{eq:Killing structure} on a standard static
space-time of the form $M=I _f\times F$ is Killing.

Hence, in the precedent discussion we proved the following result.

\begin{thm}\label{thm:Killing ssst}Let $(F,g_F)$ be a Riemannian
manifold, $f \in C^\infty_{>0}(F)$ and $\{K_{\overline b}\}_{1 \le
\overline b \le \overline m}$ a basis of Killing vector fields on
$(F,g_F)$. Let also $I$ be an open interval of the form
$I=(t_1,t_2)$ in $\mathbb R,$ where $-\infty \leq t_1 < t_2 \leq
\infty.$
Consider the standard static space-time $I _f\times F$ with the
metric $g=-f^2{\rm d}t^2 \oplus g_F$.

Then, any Killing vector field on $I _f\times F$ admits the
structure
\begin{equation}\label{eq:eq:Killing 2-s no 0}
K = \psi h \partial_t + \phi^{\overline b} K_{\overline b}
\end{equation}
where $h$ and $\phi^{\overline b} \in C^\infty(I)$ for any
$\overline b \in\{1, \cdots, \overline m\}$ and $\psi \in
C^\infty(F).$

Furthermore, assume that $K$ is a vector field on $I _f\times F$
with the structure as in \eqref{eq:eq:Killing 2-s no 0}. Then,
\begin{itemize}
\item [{\bf (i)}] if $h \equiv 0,$ then the vector field
$K=\phi^{\overline b} K_{\overline b}$ is Killing on the standard
static space-time $I _f\times F$ if and only if the functions
$\phi^{\overline b}$ are constant and $\phi^{\overline b}
K_{\overline b}(\ln f)=0$.

\item [{\bf (ii)}] if $h \equiv h_0 \neq 0$ is constant,
then the vector field $K= \psi h_0 \partial_t + \phi^{\overline b}
K_{\overline b}$ is Killing on the standard static space-time
$I_f \times F$ if and only if \eqref{3eqnst vector field h=h0 2}
is satisfied.

\item [{\bf (iii)}] if
$K$ is a Killing vector field on the standard static space-time
$I _f\times F$ with the nonconstant function $h$, then the set of
functions $h$, $\psi$ and $\{\phi_{\overline b}\}_{1 \le \overline
b \le \overline m}$ satisfy \eqref{eq:3eqnst 2 h no 0} for any
$t_0 \in I$ with $h(t_0)\neq 0.$

Conversely, if a set of functions $h$, $\psi$ and
$\{\phi_{\overline b}\}_{1 \le \overline b \le \overline m}$,
satisfy \eqref{eq:3eqnst 2 h no 0} with an arbitrary $t_0$ in $I$
and the entire interval $I$ (instead of $I_{t_0}$), then the
vector field $\tilde{K}$ on the standard static space-time
$I _f\times F$ associated to the set of functions as in
\eqref{eq:eq:Killing 2-s no 0} is Killing on $I _f\times F$.
\end{itemize}
\end{thm}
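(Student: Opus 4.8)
The plan is to build the proof directly on the adapted S\'anchez procedure already assembled above, treating the theorem as a consolidation of that analysis. First I would invoke Proposition \ref{prp:sanchez99}, which guarantees that the horizontal part of any Killing field is conformal-Killing on each leaf and the vertical part is Killing on each fiber. Since the base is $(I,-{\rm d}t^2)$, Remark \ref{rem:Lie deriv} tells us every conformal-Killing field on $I$ has the explicit form $h\partial_t$; substituting this explicit form in place of a finite conformal-Killing basis in the structure equations \eqref{form}--\eqref{2eqn} yields the representation \eqref{eq:Killing structure}, i.e. $K=\psi h\partial_t+\phi^{\overline b}K_{\overline b}$, together with the coupled system \eqref{2eqnst}. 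This establishes the first assertion of the theorem.

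Next I would reduce the Killing condition to a tractable form. Using ${\rm d}\phi^{\overline b}=(\phi^{\overline b})'{\rm d}t$ and $g_I(h\partial_t,\cdot)=-h\,{\rm d}t$, System \eqref{2eqnst} is equivalent to \eqref{3eqnst}, and raising indices in the second equation gives \eqref{3eqnst vector field}. The crucial manoeuvre is a separation of variables applied to the second equation of \eqref{3eqnst vector field}: because $\{K_{\overline b}\}$ is a basis, matching $t$-dependence forces $(\phi^{\overline b})'(t)=\gamma^{\overline b}h(t)+\delta^{\overline b}$ as in \eqref{eq:phi form 1}, which integrates to \eqref{eq:phi form 2}. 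Feeding this back yields the key identity \eqref{eq:3eqnst vector field-b-equiv}, whose right-hand side carries no $t$-dependence.

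The heart of the argument is then the trichotomy on $h$. For $h\equiv0$, \eqref{eq:3eqnst vector field-b-equiv} forces $\delta^{\overline b}=0$, so the $\phi^{\overline b}$ are constant and the first equation of \eqref{3eqnst vector field} collapses to $\phi^{\overline b}K_{\overline b}(\ln f)=0$, giving (i). For $h\equiv h_0\neq0$ constant, I would record the reduced system \eqref{3eqnst vector field h=h0}, differentiate its first line in $t$ and combine with the second to obtain the orthogonality $(f^2\grad_F\psi)(\ln f)=0$, thereby arriving at \eqref{3eqnst vector field h=h0 2} and (ii). For nonconstant $h$, I would pass to a subinterval $I_{t_0}$ on which $h\neq0$ (Notation \ref{notation: It0}); separation of variables in \eqref{eq:3eqnst vector field-b-equiv} now forces $\gamma^{\overline b}K_{\overline b}=f^2\grad_F\psi$ and $\delta^{\overline b}=0$, and differentiating the first equation of \eqref{3eqnst vector field} produces the coupling $h''\psi+h(f^2\grad_F\psi)(\ln f)=0$. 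A final dichotomy on whether $-h''/h$ is constant (yielding the exponential or affine profiles \eqref{eq:tuning 3} and the eigenvalue-type equation \eqref{eq:3eqnst eq-a 1}) or nonconstant (forcing $\psi\equiv0$ by \eqref{eq:h non constant}) assembles the alternatives \eqref{eq:3eqnst 2 h no 0}, giving (iii).

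Each implication above is reversible by retracing the computations, which supplies the sufficiency directions in (i)--(ii) and the converse in (iii); the one genuinely delicate point is the converse of (iii), where one must check that conditions verified a priori only on $I_{t_0}$, when posited on the full interval $I$, actually reconstruct a globally Killing field. I expect the main obstacle to be precisely this separation-of-variables and case-analysis bookkeeping in the nonconstant-$h$ regime: disentangling the single scalar ODE coupling $h$ and $\psi$ into the clean alternative of $\psi\equiv0$ versus $-h''/h$ constant with $(f^2\grad_F\psi)(\ln f)=\nu\psi$ requires care, as does confirming that the $t_0$-dependent integral $\int_{t_0}^t h$ and the constants $\omega^{\overline b}$ patch consistently across the connected components of $\{h\neq0\}$ to give a well-defined field on all of $I_f\times F$.
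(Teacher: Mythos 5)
Your proposal follows essentially the same route as the paper: the theorem is stated there as the summary of the "precedent discussion," which consists exactly of the steps you outline — the adaptation of S\'anchez's procedure with the explicit conformal-Killing form $h\partial_t$ on $(I,-{\rm d}t^2)$, the reduction to \eqref{3eqnst vector field}, the separation of variables giving \eqref{eq:phi form 1}--\eqref{eq:3eqnst vector field-b-equiv}, and the trichotomy on $h$ with the final dichotomy on $-h''/h$. Your identification of the delicate points (the nonconstant-$h$ bookkeeping and the passage from $I_{t_0}$ to all of $I$ in the converse of (iii)) matches where the paper itself is most terse, so the proposal is correct and essentially coincides with the paper's argument.
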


\bigskip

By completeness we consider in the following lemma the case where
the Riemannian manifold $(F,g_F)$ admits no non identically zero
Killing vector fields.

\begin{lem}\label{lem:0 Killing}
Let $(F,g_F)$ be a Riemannian manifold of dimension $s$ and $f \in
C^\infty_{>0}(F)$. Let also $I$ be an open interval of the form
$I=(t_1,t_2)$ in $\mathbb R,$ where $-\infty \leq t_1 < t_2 \leq
\infty$. Suppose that the only Killing vector field on $(F,g_F)$ is
the zero vector field. Then all the Killing vector fields on the
standard static space-time $I_f\times F$ are given by
$h_0 \partial_t$ where $h_0$ is a constant.
\end{lem}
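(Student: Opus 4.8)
The plan is to bypass the structural machinery of Theorem \ref{thm:Killing ssst}, which presupposes the existence of a nonzero Killing field on $(F,g_F)$ (and hence of a basis $\{K_{\overline b}\}$), and instead argue from the horizontal/vertical decomposition together with Proposition \ref{prp:sanchez99}. First I would take an arbitrary Killing vector field $K$ on $M=I_f\times F$ and split it, as in the discussion preceding Proposition \ref{prp:sanchez99}, into $K=K_B+K_F$, where here the base is $B=I$. By Proposition \ref{prp:sanchez99}, for each $p\in I$ the restriction of $K_F$ to the slice $F_p=\{p\}\times F$ is a Killing vector field on $(F,g_F)$. The hypothesis that the only Killing field on $(F,g_F)$ is the zero field then forces $K_F|_{F_p}=0$ for every $p$, so $K_F\equiv 0$ and $K=K_B$. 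Since $B=I$ is one-dimensional, $K_B$ is everywhere proportional to $\partial_t$, that is $K=h\,\partial_t$ for some $h\in C^\infty(I\times F)$ a priori depending on both $t$ and the point of $F$.

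It then remains to show that the Killing condition forces $h$ to be constant. The plan here is to evaluate $\mathrm{L}_K g=0$ on the coordinate field $\partial_t$ and on lifts $W,U$ of vector fields tangent to $F$, using the standard identity $(\mathrm{L}_K g)(X,Y)=K(g(X,Y))-g([K,X],Y)-g(X,[K,Y])$ (equivalent to \eqref{eq:Lie deriv 1}). With $K=h\,\partial_t$ one has $[K,\partial_t]=-(\partial_t h)\partial_t$ and $[K,W]=-W(h)\partial_t$, and since $g(\partial_t,\partial_t)=-f^2$ with $f$ independent of $t$, the $(\partial_t,\partial_t)$-component reduces to $-2f^2\,\partial_t h=0$, whence $\partial_t h=0$; the mixed component $(\partial_t,W)$ reduces to $-f^2\,W(h)=0$, whence $W(h)=0$ for every vector field $W$ on $F$, i.e.\ the $F$-gradient of $h$ vanishes; and the $(W,U)$-component is identically zero, imposing no further condition.

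Combining the two vanishing conditions gives $\mathrm{d}h=0$ on the connected product manifold $I\times F$, so $h$ equals a constant $h_0$ and $K=h_0\,\partial_t$. For the converse, $h_0\,\partial_t$ is Killing: this is immediate from Corollary \ref{cor:simple Killing} applied with $V=0$ and $\mu=0$, since then $h(t)=h_0$ is affine, $V$ is (trivially) Killing on $F$, and $V(f)=0=-\mu f$. I expect the only delicate point to be the first step: one must note that Theorem \ref{thm:Killing ssst} is unavailable here, and that the vanishing of the vertical part $K_F$ is exactly what uses the hypothesis and is supplied by Proposition \ref{prp:sanchez99}; once $K=h\,\partial_t$ is established, reducing $h$ to a constant is a routine computation.
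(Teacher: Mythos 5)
Your proof is correct. Its first half coincides with the paper's: both invoke Proposition \ref{prp:sanchez99} to see that the fibre component $K_F$ restricts to a Killing field of $(F,g_F)$ on each slice $\{p\}\times F$, hence vanishes by hypothesis, leaving $K$ tangent to the one-dimensional factor. You diverge in how you dispose of the remaining coefficient. The paper writes the surviving field in the separated form $K=\psi h\,\partial_t$ with $\psi\in C^\infty(F)$, $h\in C^\infty(I)$ (a form inherited from the ansatz \eqref{eq:Killing structure}) and then cites ``similar arguments to those applied to the system \eqref{3eqnst vector field}'' to get $h'\psi=0$ and $h\,\partial_t\otimes f^2\grad_F\psi=0$, whence either $\psi\equiv 0$ or $h$ and $\psi$ are both constant. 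You instead keep a general coefficient $h\in C^\infty(I\times F)$ and compute ${\rm L}_{h\partial_t}g$ directly: with $[h\partial_t,\partial_t]=-(\partial_t h)\partial_t$ and $[h\partial_t,W]=-W(h)\partial_t$ for lifts $W$ of fields on $F$, the $(\partial_t,\partial_t)$- and $(\partial_t,W)$-components give $-2f^2\partial_t h=0$ and $-f^2W(h)=0$ respectively, while the $(W,U)$-component is vacuous, so ${\rm d}h=0$ on the connected manifold $I\times F$. This is more elementary and self-contained: it does not presuppose that the coefficient separates as a product $\psi(x)h(t)$, and it replaces the degenerate run of the separation-of-variables machinery by a two-line Lie-derivative computation. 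Your converse via Corollary \ref{cor:simple Killing} with $V=0$ and $\mu=0$ is also valid; the paper obtains it from the ``if and only if'' of its reduced system.
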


\begin{proof}
Indeed, by Proposition \ref{prp:sanchez99} if $K$ is a Killing
vector field on $I _f\times F$, then $K=\psi h \partial_t$ where
$\psi \in C^\infty(F)$ and $h \in C^\infty (I)$. Then, by
similar arguments to those applied to the system \eqref{3eqnst
vector field}, a vector field of the latter form is Killing if and
only if the following equations are verified
\begin{equation*}
\left\{
\begin{array}{l}
(a) \qquad h^\prime \psi  =  0 \\
(b) \qquad h \partial_t \otimes f^2 \grad_F \psi  =  0.
\end{array}
\right.
\end{equation*}
As an immediate consequence, either $h$ and $\psi$ are constants
or $\psi \equiv 0$.
\end{proof}

\begin{rem}\label{rem:Killing non trivial}
If the Riemannian manifold $(F,g_F)$ admits a nonidentical zero
Killing vector field, then the family of Killing vector fields
obtained in \textit{Corollary \ref{cor:simple Killing}}
corresponds to the case of $\psi \equiv 1$ in \textit{Theorem
\ref{thm:Killing ssst} (iii)}. Thus, \eqref{eq:3eqnst 2 h no 0}
implies that $\nu =0$ and $\tau^{\overline b}=0$ for any
$\overline b$ and also $h(t)=at+b$ is affine and $\phi^{\overline
b}=\omega^{\overline b}$ is constant such that $\phi^{\overline b}
K_{\overline b}(\ln f)=\phi^{\overline b} K_{\overline b}(\ln
f)=-a$. The latter conditions agree with those in
\textit{Corollary \ref{cor:simple Killing}}.

In other words, if $\nu$ is nonzero, then the family of Killing
vector fields in \textit{Theorem \ref{thm:Killing ssst} (iii)} are
different form those in \textit{Corollary \ref{cor:simple Killing}},
they correspond to the so called non-trivial Killing vector fields
in \cite{mSK}.
\end{rem}

\begin{rem}\label{rem:subspace}
Let $f\in C^\infty_{>0}(F)$ be smooth. For any $\nu \in
\mathbb{R},$ we consider the problem given by
\begin{equation}
\label{eq:pb Killing 1} \left\{
\begin{array}{l}
f^{2} \grad_F \psi \textrm{ is a Killing vector field on }
(F,g_F);\\
(f^{2} \grad_F \psi)(\ln f)  =  \nu \psi ;\\
\psi \in C^\infty(F);\\
\end{array}
\right.
\end{equation}
and define    $\mathcal{K}_f^\nu= \{ \psi \in C^\infty(F): \psi
\textrm{ verifies } \eqref{eq:pb Killing 1} \}$. It is easy to
show that $\mathcal{K}_f^\nu$ is an $\mathbb{R}-$subspace of
$C^\infty(F)$. In particular, if $\psi \in \mathcal{K}_f^\nu$ then
\begin{equation}\label{eq:pb Killing 2}
(f^{2} \grad_F \lambda\,\psi)(\ln f)  =  \lambda\nu \,\psi, \,
\forall \lambda \in \mathbb{R}
\end{equation}
Consequently, if $\{\tau_{\overline b}\}_{1 \le {\overline b} \le
{\overline m}}$ is the set of coefficients of the Killing vector
field $f^{2} \grad_F \psi$ with respect to the basis $
\{K_{\overline b}\}_{1 \le {\overline b} \le {\overline m}}$ and
$\lambda \in \mathbb{R},$ then
\begin{equation}\label{eq:pb Killing 3}
-\lambda\nu \, \psi+\omega^{\overline b}K_{\overline b}(\ln f)= 0,
\end{equation}
where $\omega^{\overline b}=\lambda \tau^{\overline b},$ for any
$\overline b.$

Notice that, this is particularly useful in order to simplify the
hypothesis of \eqref{eq:3eqnst 2 h no 0} when $\nu \neq 0$.
\end{rem}

In order to analyze the existence of nontrivial solutions for the
problem \eqref{eq:pb Killing 1} (notice that this is relevant in
\eqref{eq:3eqnst 2 h no 0} and \eqref{3eqnst vector field h=h0
2}), we introduce the following notation.

\begin{notation} Let $(F,g_F)$ be a Riemannian manifold. Suppose
that $Z \in \mathfrak {X}(F)$ is a vector field on $(F,g_F)$ and
$\varphi \in C^\infty(F)$ is a smooth function on $F.$ Then define
a (0,2)-tensor on $F$ as follows:
\begin{equation}\label{eq:special Killing 2}
    B_Z^\varphi(\cdot,\cdot):=\textrm{d} \varphi (\cdot)
    \otimes g_F(Z,\cdot) + g_F(\cdot,Z) \otimes \textrm{d}
    \varphi (\cdot).
\end{equation}
\end{notation}

\begin{prp}\label{prp:special Killing}
Let $(F,g_F)$ be a Riemannian manifold, $f \in C^\infty_{>0}(F)$
and $\psi \in C^\infty(F)$. Then the vector field $f^{2} \grad_F
\psi$ is Killing on $(F,g_F)$ if and only if
\begin{equation}\label{eq:special Killing 1}
   {\rm H}_F^\psi + \frac{1}{f} B_{\grad_F \psi}^{f}=0,
\end{equation}
\end{prp}

\begin{proof} We begin by recalling two results. For all
smooth functions $\varphi \in C^\infty(F)$ and vector fields $X,Y
\in \mathfrak{X} (F),$ we have
\begin{equation*}\label{}
   {\rm H}_F^\varphi(X,Y) =
   g_F(\nabla_X^F \grad_F \varphi,Y)=
   \frac{1}{2} {\rm L}^F_{\grad_F \varphi} g_F(X,Y).
\end{equation*}

Moreover, for any vector field $Z \in \mathfrak X(F),$ the
following general formula can be stated.
\begin{equation*}\label{}
    {\rm L}_{\varphi Z}g_F(\cdot,\cdot)=\varphi {\rm L}_Z g_F(\cdot,\cdot) +
    \textrm{d} \varphi (\cdot) \otimes g_F(Z,\cdot) +
    g_F(\cdot,Z) \otimes \textrm{d} \varphi (\cdot).
\end{equation*}

By applying the latter formulas to the vector field $f^{2}
\nabla_F \psi,$ we obtain the following:
\begin{equation*}\label{}
   {\rm L}^F_{f^2 \grad_F \psi} g_F = f^2 {\rm L}_{\grad_F \psi}g_F +
   B_{\grad_F \psi}^{f^{2}} = 2 f^2 \left[{\rm H}^\psi_F + \frac{1}{f}
   B_{\grad_F \psi}^{f}\right].
\end{equation*}
Then one can conclude that $f^{2} \grad_F \psi$ is a Killing
vector field on $(F,g_F)$ if and only if \eqref{eq:special Killing
1} is satisfied.
\end{proof}

In order to study Killing vector fields on standard static
space-times, notice the central role of the problem given below,
i.e, (\ref{eq:3eqnst 4}) which appears throughout \textit{Theorem
\ref{thm:Killing ssst}}, \textit{Proposition \ref{prp:special
Killing}} and the identity stated as $fg_F(\grad_F \psi,\grad_F
f)=(f \grad_F \psi)(f),$
\begin{equation} \label{eq:3eqnst 4}
\left\{
\begin{array}{l}
f \in C^\infty_{>0}(F), \psi \in C^\infty(F);\\
\displaystyle {\rm H}_F^\psi + \frac{1}{f}
B_{\grad_F \psi}^{f} = 0 ;\\
fg_F(\grad_F \psi,\grad_F f) = \nu \psi \textrm{ where } \nu
\textrm{ is a constant}.
\end{array}
\right.
\end{equation}

\begin{rem}\label{rem:Lie algebra 1} By
\textit{Proposition \ref{prp:special Killing}}, if the dimension
of the Lie algebra of Killing vector fields of $(F,g_F)$ is zero,
then the system \eqref{eq:3eqnst 4} has only the trivial solution
given by a constant $\psi $ (this constant is not $0$ only if $\nu
=0$). This happens, for instance when $(F,g_F)$ is a compact
Riemannian manifold of negative-definite Ricci curvature without
boundary, indeed it is sufficient to apply the vanishing theorem
due to Bochner (see for instance \cite {Bochner46}, \cite[Theorem
1.84]{B} or \cite[Proposition 6.6 of Chapter III]{RG}).
\end{rem}

\begin{lem}\label{lem: laplace spectrum 1}
Let $(F,g_F)$ be a Riemannian manifold and $f \in
C^\infty_{>0}(F)$. If $(\nu,\psi)$ satisfies \eqref{eq:3eqnst 4},
then $\nu$ is an eigenvalue and $\psi$ is an associated
$\nu-$eigenfunction of the elliptic problem:
\begin{equation}\label{eq:weight Laplace-Beltrami 1}
    -\Delta_{g_F} \psi = \nu \frac{2}{f^2}
    \psi \, \textrm{ on } (F,g_F).
\end{equation}
\end{lem}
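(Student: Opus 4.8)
The plan is to take the metric trace of the Hessian identity appearing in the second line of \eqref{eq:3eqnst 4} and recognize that the trace of the Hessian is precisely the Laplace--Beltrami operator. This immediately converts the tensorial relation ${\rm H}_F^\psi + \frac{1}{f} B_{\grad_F \psi}^{f} = 0$ into a scalar equation for $\Delta_{g_F}\psi$, after which the third line of \eqref{eq:3eqnst 4} supplies exactly the substitution needed to produce the weighted eigenvalue equation \eqref{eq:weight Laplace-Beltrami 1}.

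First I would recall the standard fact that $\Delta_{g_F}\psi = \trace_{g_F} {\rm H}_F^\psi$, so that applying $\trace_{g_F}$ to the Hessian equation in \eqref{eq:3eqnst 4} yields
\begin{equation*}
\Delta_{g_F}\psi + \frac{1}{f}\,\trace_{g_F} B_{\grad_F \psi}^{f} = 0.
\end{equation*}
The only genuine computation is the trace of the $(0,2)$-tensor $B_{\grad_F \psi}^{f}$. Using the definition \eqref{eq:special Killing 2} with $Z=\grad_F \psi$ and $\varphi=f$, and evaluating on a $g_F$-orthonormal frame $\{e_i\}$, each of the two symmetric summands contributes $\sum_i {\rm d}f(e_i)\,g_F(\grad_F \psi,e_i) = g_F(\grad_F f,\grad_F \psi)$, so that $\trace_{g_F} B_{\grad_F \psi}^{f} = 2\,g_F(\grad_F f,\grad_F \psi)$.

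Combining these gives $\Delta_{g_F}\psi = -\frac{2}{f}\,g_F(\grad_F f,\grad_F \psi)$. Finally, the third line of \eqref{eq:3eqnst 4}, namely $f\,g_F(\grad_F \psi,\grad_F f)=\nu\psi$, rearranges to $g_F(\grad_F \psi,\grad_F f)=\nu\psi/f$, and substituting this into the previous identity produces $-\Delta_{g_F}\psi = \frac{2\nu}{f^2}\psi$, which is precisely \eqref{eq:weight Laplace-Beltrami 1}. Thus $\nu$ is an eigenvalue and $\psi$ an associated eigenfunction of the weighted problem.

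The argument is essentially bookkeeping rather than an obstacle: the step requiring care is getting the factor $2$ in $\trace_{g_F} B_{\grad_F \psi}^{f}$ correct (coming from the symmetrization in $B$) and correctly reading off $g_F(\grad_F \psi,\grad_F f)$ from the third equation, where the extra factor of $f$ is what ultimately yields the $2/f^2$ weight rather than $2/f$. No compactness or regularity hypothesis beyond $f\in C^\infty_{>0}(F)$ and $\psi\in C^\infty(F)$ is needed, since the whole derivation is a pointwise identity.
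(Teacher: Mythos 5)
Your proof is correct and follows essentially the same route as the paper: both take the $g_F$-trace of the Hessian equation in \eqref{eq:3eqnst 4}, use $\trace_{g_F} B_{\grad_F \psi}^{f}= 2\, g_F(\grad_F \psi,\grad_F f)$, and substitute the relation $f\,g_F(\grad_F \psi,\grad_F f)=\nu\psi$ to obtain \eqref{eq:weight Laplace-Beltrami 1}. Nothing further is needed.
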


\begin{proof} First of all, note that
\begin{equation}\label{eq:bilinear 1}
    B_{\grad_F \psi}^f=\textrm{d} f \otimes \textrm{d} \psi +
    \textrm{d} \psi \otimes \textrm{d} f,
\end{equation}
then by taking the $g_F-$trace of the both sides, we have:
\begin{equation}\label{eq:bilinear 2}
    \trace B_{\grad_F \psi}^f= 2 g_F (\grad_F \psi,\grad_F f).
\end{equation}
Thus, by taking the $g_F-$trace of the first equation in
\eqref{eq:3eqnst 4} and then by applying the second equation of
\eqref{eq:3eqnst 4}, we obtain \eqref{eq:weight Laplace-Beltrami
1}. Hence, $\nu $ belongs to the spectrum of the weighted
eigenvalue problem \eqref{eq:weight Laplace-Beltrami 1}.
\end{proof}

\begin{rem}\label{rem:equivalent system}
Let $(F,g_F)$ be a Riemannian manifold.
\begin{description}
    \item[i]
Notice that similar arguments to those applied in Lemma \ref{lem:
laplace spectrum 1} allow us to prove that the system
\eqref{eq:3eqnst 4} is \textit{equivalent} to
\begin{equation} \label{eq:3eqnst 5}
\left\{
\begin{array}{l}
f \in C^\infty_{>0}(F), \psi \in C^\infty(F);\\
\displaystyle {\rm H}_F^\psi + \frac{1}{f}
B_{\grad_F \psi}^{f} = 0 ;\\
-\Delta_{g_F} \psi = \displaystyle \nu \frac{2}{f^2}
    \psi \textrm{ where } \nu
\textrm{ is a constant}.
\end{array}
\right.
\end{equation}
    \item[ii] Assuming \eqref{eq:3eqnst 5} (or equivalently \eqref{eq:3eqnst
    4}), if $p \in F$ is a critical point of $f$ or $\psi$, then $\nu = 0$
    or $\psi(p)=0$.
\end{description}
\end{rem}

\begin{prp}\label{prp:laplace spectrum 1} Let $(F,g_F)$ be a compact
Riemannian manifold and $f \in C^\infty_{>0}(F)$. Then $(\nu,\psi)$
satisfies \eqref{eq:3eqnst 4} if and only if $\nu =0$ and $\psi$ is
constant.
\end{prp}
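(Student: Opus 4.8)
The plan is to establish the nontrivial (``only if'') direction by turning the Hessian equation of \eqref{eq:3eqnst 4} into a statement about a Killing field and then extracting everything from one integration by parts; the converse is immediate, since a constant $\psi$ has $\grad_F\psi\equiv 0$, so every line of \eqref{eq:3eqnst 4} holds with $\nu=0$.

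First I would invoke \emph{Proposition \ref{prp:special Killing}}: the second line of \eqref{eq:3eqnst 4} says exactly that $X:=f^{2}\grad_F\psi$ is a Killing vector field on $(F,g_F)$. The classical fact I will exploit is that a Killing field is divergence-free, $\operatorname{div}X=0$ (its covariant derivative is skew-symmetric, hence trace-free). Since $F$ is compact and has no boundary, the divergence theorem applied to $\psi X$ gives
\[
0=\int_F \operatorname{div}(\psi X)\,dV_{g_F}=\int_F\bigl(\psi\,\operatorname{div}X + X(\psi)\bigr)\,dV_{g_F}=\int_F X(\psi)\,dV_{g_F}.
\]
But $X(\psi)=g_F(f^{2}\grad_F\psi,\grad_F\psi)=f^{2}\,g_F(\grad_F\psi,\grad_F\psi)\ge 0$ pointwise, and $f^{2}>0$ everywhere; hence the vanishing of its integral forces $g_F(\grad_F\psi,\grad_F\psi)\equiv 0$, i.e.\ $\grad_F\psi\equiv 0$, so $\psi$ is constant. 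This is the heart of the matter, and I expect the only thing to be careful about is the simultaneous use of $\operatorname{div}X=0$ and the strict positivity of $f$; there is no serious obstacle.

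It then remains to identify $\nu$. Feeding $\grad_F\psi\equiv 0$ into the third line of \eqref{eq:3eqnst 4} yields $\nu\psi=f\,g_F(\grad_F\psi,\grad_F f)=0$, so $\nu=0$ as soon as $\psi$ is a nonzero constant, which is the claim. (The degenerate solution $\psi\equiv 0$ satisfies \eqref{eq:3eqnst 4} for every $\nu$ and is the zero eigenfunction, carrying no geometric content; thus the statement is read as: any solution has $\psi$ constant, and a nonzero $\psi$ forces $\nu=0$.) As a consistency check one could instead run the spectral route of \emph{Lemma \ref{lem: laplace spectrum 1}}: pairing $-\Delta_{g_F}\psi=\tfrac{2\nu}{f^{2}}\psi$ with $\psi$ and integrating over $F$ gives $\int_F g_F(\grad_F\psi,\grad_F\psi)\,dV_{g_F}=2\nu\int_F \tfrac{\psi^{2}}{f^{2}}\,dV_{g_F}$, which already shows $\nu\ge 0$ but not $\nu=0$. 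I would therefore make the divergence identity above the backbone of the proof rather than the spectral estimate.
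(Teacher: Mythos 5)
Your argument is correct, and it takes a genuinely different route from the paper's. The paper first evaluates the constraint $f\,g_F(\grad_F\psi,\grad_F f)=\nu\psi$ at a point $p_0$ where the (compact) manifold forces $\psi$ to attain its infimum, obtaining $\nu\,\psi(p_0)=0$; it then invokes Lemma \ref{lem: laplace spectrum 1} to view $(\nu,\psi)$ as an eigenpair of the weighted problem $-\Delta_{g_F}\psi=\tfrac{2\nu}{f^2}\psi$ and uses the standard spectral facts (nonnegative spectrum; the only eigenfunctions that do not change sign are the constants, with eigenvalue $0$) to split into the cases $\psi(p_0)\ge 0$ and $\psi(p_0)<0$. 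You instead bypass the spectral theory entirely: from Proposition \ref{prp:special Killing} the field $X=f^2\grad_F\psi$ is Killing, hence divergence-free, and integrating $\operatorname{div}(\psi X)=X(\psi)=f^2\lvert\grad_F\psi\rvert^2\ge 0$ over the closed manifold forces $\grad_F\psi\equiv 0$ at once; the third line of \eqref{eq:3eqnst 4} then gives $\nu\psi=0$. Your route is more elementary and self-contained (it does not need Lemma \ref{lem: laplace spectrum 1}, nor the min-point case analysis), and it isolates the fact that constancy of $\psi$ follows from the Hessian equation and compactness alone; the paper's route has the advantage of running through the weighted eigenvalue problem \eqref{eq:weight Laplace-Beltrami 1}, which the authors emphasize as central elsewhere in the text. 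Both proofs share the same mild imprecision in the degenerate case $\psi\equiv 0$, where \eqref{eq:3eqnst 4} holds for every $\nu$; you flag this explicitly and read the statement accordingly, which is fair, since the paper's own proof tacitly assumes $\psi$ is a genuine (nonzero) eigenfunction at the corresponding step.
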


\begin{proof} The necessity part is clear, so we will concentrate
our attention on the sufficiency part.

First of all, notice that by the second equation of
\eqref{eq:3eqnst 4}, if $p \in F$ is a critical point of $\psi$,
then $\nu \psi(p)=0$. Then, since $(F,g_F)$ is compact, there
exists a point $p_0 \in F$ such that $\psi(p_0)=\inf_F \psi$ and
consequently, $\nu \psi(p_0)=0$.

On the other hand, by applying \textit{Lemma \ref{lem: laplace
spectrum 1}}, one can conclude that $\nu$ is an eigenvalue and
$\psi$ is an associated $\nu-$eigenfunction of the elliptic
problem \eqref{eq:weight Laplace-Beltrami 1}. Besides,  since
$(F,g_F)$ is compact, it is well known that the eigenvalues of
\eqref{eq:weight Laplace-Beltrami 1} form a sequence in
$\mathbb{R}_{\ge 0}$ and the only eigenfunctions without changing
sign are the constants corresponding to the eigenvalue $0$.

Thus, if $\psi(p_0) \ge 0$, then $\nu =0$ and $\psi $ results a
nonnegative constant. Alternatively, if $\psi(p_0) < 0$, then $\nu
\psi(p_0)=0$, so $\nu =0.$ As a consequence of that, $\psi$ is a
negative constant.
\end{proof}

\textit{Theorem \ref{thm:Killing ssst}} and \textit{Proposition
\ref{prp:laplace spectrum 1}} provide the characterization of the
Killing vector fields on a standard static space-time when its Riemannian
part is compact (compare the result with \textit{Corollary
\ref{cor:simple Killing}}).

\begin{thm}\label{thm:killing compact fiber} Let $(F,g_F)$ be a
Riemannian manifold, $f \in C^\infty_{>0}(F)$
and $I$ be an open interval of the form $I=(t_1,t_2)$ in $\mathbb
R,$ where $-\infty \leq t_1 < t_2 \leq \infty$.
Consider the standard static space-time $I _f\times F$ with the
metric $g=-f^2{\rm d}t^2 \oplus g_F$.
If $(F,g_F)$ is compact then, the set of all Killing vector fields
on the standard static space-time $(M,g)$ is given by
$$\{a
\partial_t + \tilde{K}| \, a \in \mathbb{R}, \tilde{K} \textrm{ is a
Killing vector field on } (F,g_F) \textrm{ and } \tilde{K}(f)=0
\}.$$
\end{thm}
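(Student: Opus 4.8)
The plan is to prove both inclusions, reading the ``only Killing vector fields'' off the structure theorem established above together with the compactness input of \textit{Proposition \ref{prp:laplace spectrum 1}}. For the inclusion $\supseteq$ (that every field $a\partial_t + \tilde K$ of the indicated type is Killing) I would simply invoke \textit{Corollary \ref{cor:simple Killing}} with $h(t)\equiv a$: being constant, $a$ is affine with $\mu=0$, condition (1) is the hypothesis that $\tilde K$ is Killing on $F$, and condition (3) reads $\tilde K(f)=-\mu f=0$, exactly the assumed constraint. So these fields are Killing on $M$, and this direction needs no compactness.

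The substance is the reverse inclusion. First I would dispose of the degenerate case in which $(F,g_F)$ carries no nonzero Killing field: \textit{Lemma \ref{lem:0 Killing}} then says every Killing field on $M$ is $h_0\partial_t$ with $h_0$ constant, which has the asserted form with $\tilde K=0$. Otherwise I fix a basis $\{K_{\overline b}\}$ and apply \textit{Theorem \ref{thm:Killing ssst}}, so an arbitrary Killing field is $K=\psi h\partial_t + \phi^{\overline b}K_{\overline b}$, and I run through the three cases. In case (i), $h\equiv 0$, the theorem already gives $\phi^{\overline b}$ constant with $\phi^{\overline b}K_{\overline b}(\ln f)=0$, so $K=\tilde K$ with $\tilde K$ Killing on $F$ and $\tilde K(f)=0$ (take $a=0$). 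In case (ii), $h\equiv h_0\neq 0$, the conditions \eqref{3eqnst vector field h=h0 2} say precisely that $(\nu,\psi)=(0,\psi)$ solves \eqref{eq:3eqnst 4} (using \textit{Proposition \ref{prp:special Killing}} and the identity $(f^2\grad_F\psi)(\ln f)=fg_F(\grad_F\psi,\grad_F f)$); compactness via \textit{Proposition \ref{prp:laplace spectrum 1}} then forces $\psi$ constant, whence $\grad_F\psi=0$, all $\tau^{\overline b}=0$, and $\phi^{\overline b}=\omega^{\overline b}$ constant with $\omega^{\overline b}K_{\overline b}(\ln f)=0$, giving $K=(\psi h_0)\partial_t+\tilde K$ with $a=\psi h_0\in\mathbb R$ and $\tilde K(f)=0$.

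Case (iii), $h$ nonconstant, is where the only real obstacle lies, and the key is to show it cannot in fact produce a genuinely $t$-dependent time component. Here the functions satisfy \eqref{eq:3eqnst 2 h no 0}; in subcase (a) one has $\psi\equiv 0$ and again $K=\tilde K$ with $\tilde K(f)=0$, while subcase (b) exhibits $(\nu,\psi)$ as a solution of \eqref{eq:3eqnst 4}, so \textit{Proposition \ref{prp:laplace spectrum 1}} forces $\nu=0$ and $\psi\equiv c$ constant, and then \eqref{eq:tuning 3} gives $h(t)=at+b$. The remaining constraint $h'(t_0)\psi+\omega^{\overline b}K_{\overline b}(\ln f)=0$ becomes $ac+\tilde K(\ln f)=0$ with $\tilde K:=\omega^{\overline b}K_{\overline b}$ Killing on $F$; the crux is to exclude $ac\neq 0$, i.e.\ a nonzero constant value of $\tilde K(\ln f)$.

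I would settle this by compactness: $f$ attains its maximum at some $p_0\in F$, where $\grad_F f$ vanishes, so $\tilde K(\ln f)(p_0)=f(p_0)^{-1}g_F(\tilde K,\grad_F f)(p_0)=0$; since $\tilde K(\ln f)$ is constant it then vanishes identically, forcing $ac=0$. As $h$ is nonconstant, $a\neq 0$, hence $\psi\equiv c=0$, which collapses subcase (b) to $K=\tilde K$ with $\tilde K(f)=0$ and $a=0$. Thus in every case $K=a\partial_t+\tilde K$ of the required shape, completing the proof. (Equivalently, the vanishing of the constant follows from $\int_F \tilde K(\ln f)\,dV_{g_F}=0$, valid because a Killing field is divergence-free on a closed manifold.) I expect the isolation of this last contradiction in case (iii) to be the main obstacle, since all other cases are essentially direct bookkeeping on top of \textit{Proposition \ref{prp:laplace spectrum 1}}.
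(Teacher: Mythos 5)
Your proposal is correct and follows essentially the same route as the paper: reduce via Theorem \ref{thm:Killing ssst} (and Lemma \ref{lem:0 Killing} for the degenerate case) together with Proposition \ref{prp:laplace spectrum 1} to the form $K=\psi h\partial_t+\phi^{\overline b}K_{\overline b}$ with $h$ affine, $\psi$ and $\phi^{\overline b}$ constant subject to $a\psi+\phi^{\overline b}K_{\overline b}(\ln f)=0$, and then kill the constant $\tilde K(\ln f)$ by evaluating at an extremum of $f$ on the compact $F$ (the paper uses the minimum, you the maximum — immaterial). Your case-by-case bookkeeping and the divergence-theorem alternative are just more explicit versions of what the paper compresses into one sentence.
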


\begin{proof} If $(F,g_F)$ has only the zero
Killing vector field, the result is an easy consequence of Lemma
\ref{lem:0 Killing}.

Let us consider now the case there exists a basis $\{K_{\overline b}\}_{1
\le \overline b \le \overline m}$ for the space of Killing vector fields
on $(F,g_F)$. \textit{Theorem \ref{thm:Killing ssst}} and
\textit{Proposition \ref{prp:laplace spectrum 1}} imply that a
vector field $K$ on the standard static space-time $(M,g)$ is
Killing if and only if it admits the structure
\begin{equation}\label{eq:eq:Killing 2-s no 0 compact fiber}
K = \psi h \partial_t + \phi^{\overline b} K_{\overline b},
\end{equation}
where
\begin{enumerate}
\item $h(t)=a t+b$ with constants $a$ and $b$;
\item $\psi$ is constant;
\item $\phi^{\overline b}$ are constants satisfying
$a \psi + \phi^{\overline b} K_{\overline b}(\ln f)=0$.
\end{enumerate}
Since $(F,g_F)$ is compact, then $\inf_F \ln f$ is reached on a
point $p_0 \in F$. Set $\tilde{K}=\phi^{\overline b} K_{\overline
b}$. So $\tilde{K}(\ln f)|_{p_0}=0$ and by (3) $a=0$ or $\psi =0$.
Hence we proved that all the Killing vector fields on $I _f\times F$
are given by a Killing vector field on $(F,g_F)$ plus
eventually a real multiple of $\partial_t$. Note that
$\displaystyle \tilde{K}(\ln f)=\frac{1}{f} \tilde{K}(f)$, so by
(3) $\tilde{K}(f)=0$.
\end{proof}

\begin{rem} \label{rem:sharipov2007}
\textit{(Killing vector fields in the Einstein static universe)}
In \cite{Sharipov2007}, the author studied Killing vector fields
of a closed homogeneous and isotropic universe (for related
questions in quantum field theory and cosmology see \cite{dewitt
1975,dewitt 2003,Fulling1987, Landau Lifshits 1988}). Theorem 6.1
of \cite{Sharipov2007} corresponds to our Theorem \ref{thm:killing
compact fiber} for the spherical universe $\mathbb{R} \times
\mathbb{S}^3$ with the pseudo-metric $\displaystyle -( R^2 {\rm
d}t^2 -R^2 h_0)$, where the sphere $\mathbb{S}^3$ endowed with the
usual metric $h_0$ induced by the canonical Euclidean metric of
$\mathbb{R}^4$ and $R$ is a real constant (i.e., a stable
universe).
\end{rem}


As we have already mentioned in {\it Remark \ref{rem:Lie algebra
1}}, any Killing vector field of a compact Riemannian manifold of
negative-definite Ricci tensor is equal to zero. Thus, one can easily
state the following result.


\begin{cor} \label{cor:killing compact fiber of neggative definite
Ricci tensor} Let $M=I _f\times F$ be a standard static space-time
with the metric $g=-f^2{\rm d}t^2 \oplus g_F.$ Suppose that
$(F,g_F)$ is a compact Riemannian manifold of negative-definite
Ricci tensor. Then, any Killing vector field on the standard static
space-time $(M,g)$ is given by $a \partial_t$ where $a \in \mathbb
R.$
\end{cor}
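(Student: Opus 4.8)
The plan is to combine two facts established earlier in the excerpt: the classification of Killing vector fields on a standard static space-time with compact Riemannian part (Theorem \ref{thm:killing compact fiber}) and the Bochner-type vanishing observation recorded in Remark \ref{rem:Lie algebra 1}. First I would invoke Theorem \ref{thm:killing compact fiber}, which tells us that since $(F,g_F)$ is compact, every Killing vector field on $M=I_f\times F$ has the form $a\partial_t + \tilde{K}$, where $a\in\mathbb{R}$ and $\tilde{K}$ is a Killing vector field on $(F,g_F)$ satisfying $\tilde{K}(f)=0$. This reduces the problem entirely to understanding the Killing vector fields of the Riemannian factor.

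The key step is then to exploit the negative-definiteness of the Ricci tensor. As noted in Remark \ref{rem:Lie algebra 1}, Bochner's vanishing theorem applies to a compact Riemannian manifold without boundary whose Ricci curvature is negative-definite: under this hypothesis the only Killing vector field on $(F,g_F)$ is the zero field. Consequently $\tilde{K}=0$, and the constraint $\tilde{K}(f)=0$ is automatically satisfied. I would cite the same references given in Remark \ref{rem:Lie algebra 1} (for instance \cite{Bochner46} or \cite[Theorem 1.84]{B}) for the vanishing result, rather than reproving it.

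Putting these together, the set of Killing vector fields on $(M,g)$ collapses to $\{a\partial_t \mid a\in\mathbb{R}\}$, which is exactly the claimed conclusion. The proof is essentially a short deduction stringing together the two cited results, so there is no substantial computation to carry out.

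I do not expect a genuine obstacle here, since both ingredients are already available. The only point requiring a small amount of care is the hypothesis matching for Bochner's theorem: one must confirm that the statement in Remark \ref{rem:Lie algebra 1} indeed covers the \emph{negative-definite} (as opposed to merely nonpositive) Ricci case and applies to a compact manifold \emph{without boundary}, which is the setting assumed throughout. Once that is checked, the corollary follows immediately from Theorem \ref{thm:killing compact fiber} with $\tilde{K}\equiv 0$.
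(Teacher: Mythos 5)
Your proposal is correct and matches the paper's own argument exactly: the corollary is obtained by combining Theorem \ref{thm:killing compact fiber} with Bochner's vanishing theorem as recorded in Remark \ref{rem:Lie algebra 1}, forcing $\tilde{K}=0$. Nothing further is needed.
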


\section{Non-rotating Killing vector fields }

In this section we apply Theorem \ref{thm:killing compact
fiber} of {\it Section 2} to the analysis of non-rotating
Killing vector fields on standard static space-times also
called \textit{static regular predictable space-times}
in \cite[p.325]{HE} (also see a recent article, i.e,
\cite{Sanchez-Senovilla2007} for a related question).

We first recall the definition of the $\curl$ operator on
semi-Riemannian manifolds as: if $V$ is a vector field on a
semi-Riemannian manifold $(M,g)$, then $\curl V$ is the
2-covariant tensor field defined by
\begin{equation}\label{eq:curl}
\curl V (X,Y):=g(\nabla_X V,Y)-g(\nabla_Y V,X),
\end{equation}
%
%
where $X,Y \in \mathfrak X(M)$ (see for instance
\cite{ON,Gutierrez-Olea2003}). Thus, it is easy to prove
\begin{eqnarray}\label{eq:curl 0}
\curl (\phi V) (X,Y)& = & X(\phi) g(V,Y)- Y(\phi)g( V,X) \\
& + & \phi \curl V (X,Y) \nonumber ,
\end{eqnarray}
where $X, Y$ are as above and $\phi \in C^\infty (M)$.

We will follow the next definitions (see
\cite{Gutierrez-Olea2003,Gutierrez-Olea2007}): A vector field $V$
on the semi-Riemannian manifold $(M,g)$ is said to be
%
%
\begin{description}
    \item[non-rotating]\footnote{In \cite {Gutierrez-Olea2003,Gutierrez-Olea2007}
    this condition is called irrotational.}
    if $\curl V (X,Y)=0$ for all $X,Y \in \mathfrak
    X(M)$.
    \item[orthogonally irrotational]\footnote{In \cite{ON,Sanchez-Senovilla2007}
    this condition is called irrotational.}
    if $\curl V (X,Y)=0$ for any $X,Y \in \mathfrak
    X(M)$ orthogonal to $V$. This condition is equivalent to
    ``$V$ has integrable orthogonal distribution".
\end{description}

Notice that being orthogonally irrotational is necessary for
being non-rotating. Furthermore \eqref{eq:curl 0} implies that
if $V$ is orthogonally irrotational, then so is $\phi V$ for any
$\phi \in C^\infty_{>0}(M)$. Indeed, since $\phi$ does not vanish,
$X$ is orthogonal to $\phi V$ if and only if it is orthogonal to
$V$.

\noindent However, if $V$ is non-rotating then, $\phi V$ is not
necessarily non-rotating for some $\phi \in C^\infty_{>0} (M)$
(see \eqref{eq:curl 0}).

\begin{prp}\label{prp:time irrotational} Let $\kappa$ be a Killing
vector field on the standard static space-time $(\mathbb{R} \times
F,g:=-f^2 dt^2 + g_F)$ such that $(\curl \kappa) (X,Y)=0$ for all
$X,Y$ orthogonal vector fields to $\partial_t$ defined on
$(\mathbb{R} \times F,g)$. If $(F,g_F)$ is compact and simply
connected, then $\kappa = a
\partial_t$ where $a$ is a real constant. In particular, $\kappa$
becomes time-like if $a \neq 0$.
\end{prp}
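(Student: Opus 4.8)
The plan is to feed the hypothesis on $\curl\kappa$ into the structure theorem for Killing fields with compact Riemannian part and then exploit the topology of $F$. By \textit{Theorem \ref{thm:killing compact fiber}}, since $(F,g_F)$ is compact, the Killing field $\kappa$ on $\mathbb R_f\times F$ decomposes as $\kappa = a\,\partial_t + \tilde K$ with $a\in\mathbb R$ and $\tilde K$ a Killing vector field on $(F,g_F)$ (satisfying $\tilde K(f)=0$). Thus it suffices to prove $\tilde K\equiv 0$; the closing remark that $\kappa$ is timelike when $a\neq0$ is then immediate, since $g(\kappa,\kappa)=a^2 g(\partial_t,\partial_t)=-a^2f^2<0$.

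First I would reinterpret the orthogonality hypothesis. As $g=-f^2{\rm d}t^2\oplus g_F$ is block diagonal, a vector field on $\mathbb R\times F$ is orthogonal to $\partial_t$ if and only if it is tangent to the $F$-factor; hence the hypothesis reads $\curl\kappa(X,Y)=0$ for all $X,Y\in\mathfrak X(\mathbb R\times F)$ tangent to $F$. Next I would compute $\curl\kappa$ on such pairs by passing to the dual $1$-form. From \eqref{eq:curl}, together with the torsion-freeness and metric compatibility of $\nabla$, one has $\curl V=d V^\flat$ for every $V$, where $V^\flat=g(V,\cdot)$. Here $\kappa^\flat=-a f^2\,{\rm d}t+\omega$, where $\omega$ is the pullback to $\mathbb R\times F$ of the $1$-form $g_F(\tilde K,\cdot)$ on $F$, so that $\curl\kappa=d\kappa^\flat=-a\,d(f^2)\wedge{\rm d}t+d\omega$. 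Evaluated on $X,Y$ tangent to $F$, the term $d(f^2)\wedge{\rm d}t$ vanishes (it carries the factor ${\rm d}t$, which annihilates $X$ and $Y$), leaving $\curl\kappa(X,Y)=d\omega(X,Y)$. Since $d\omega$ is the pullback of a $2$-form on $F$, the hypothesis is therefore equivalent to the statement that $g_F(\tilde K,\cdot)$ is a \emph{closed} $1$-form on $(F,g_F)$, i.e. $\tilde K$ is non-rotating on $F$.

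To conclude I would use compactness and simple connectivity. Because $F$ is simply connected, $H^1(F;\mathbb R)=0$, so the closed form $g_F(\tilde K,\cdot)$ is exact: $g_F(\tilde K,\cdot)=d\rho$ for some $\rho\in C^\infty(F)$, equivalently $\tilde K=\grad_F\rho$. As a Killing field, $\tilde K$ is divergence free (the skew-symmetry of $\nabla^F\tilde K$ makes its trace vanish), whence $\Delta_{g_F}\rho=\operatorname{div}_F(\grad_F\rho)=\operatorname{div}_F\tilde K=0$. Since $F$ is compact and without boundary, the harmonic function $\rho$ is constant, so $\tilde K=\grad_F\rho\equiv0$ and $\kappa=a\,\partial_t$.

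I expect the crux to be the second paragraph: one must check carefully that, when restricted to directions orthogonal to $\partial_t$, the warped term $-a\,d(f^2)\wedge{\rm d}t$ coming from the $F$-dependent warping of ${\rm d}t^2$ contributes nothing, so that only the intrinsic $F$-part $d\omega$ of $\curl\kappa$ remains. A fully equivalent route to the same reduction is to use the warped-product connection formulas of a standard static space-time to compute $\nabla_X\kappa$ for $X$ tangent to $F$ and to combine the vanishing of the antisymmetric part (the curl hypothesis) with the vanishing of the symmetric part (since $\kappa$ is Killing); this in fact yields the stronger fact that $\tilde K$ is parallel, after which $g_F(\tilde K,\cdot)$ is harmonic and $H^1(F;\mathbb R)=0$ again forces $\tilde K\equiv0$.
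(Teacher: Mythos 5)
Your proof is correct and follows essentially the same route as the paper: decompose $\kappa = a\,\partial_t + \tilde K$ via Theorem \ref{thm:killing compact fiber}, show the curl hypothesis forces $g_F(\tilde K,\cdot)$ to be a closed $1$-form on $F$, and then use simple connectedness plus compactness to conclude $\tilde K\equiv 0$. The only cosmetic differences are that you carry out the reduction to $F$ via the identity $\curl V = dV^\flat$ rather than the paper's explicit computation of $\curl\partial_t$ combined with the warped-product connection formulas, and that you obtain harmonicity of the potential directly from the divergence-freeness of Killing fields instead of passing through the vanishing Hessian of Proposition \ref{prp:special Killing}.
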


\begin{proof}
First of all we observe that it is easy to prove the following
identity.
\begin{equation}\label{eq:grad}
    -f^2 \grad t = \partial_t.
\end{equation}
Thus by using \eqref{eq:curl 0},
\begin{eqnarray} \label{eq:curl partial t}
   (\curl \partial_t)(X,Y) & = & \displaystyle
   \left(\curl f^2 (-\grad t)\right) (X,Y) \\
   & = & \displaystyle \frac{X(f^2)}{f^2} g(\partial_t,Y)-
   \frac{Y(f^2)}{f^2}g(\partial_t,X) \nonumber \\
   & - &f^2 (\underbrace{\curl \grad t}_{=0}) (X,Y) \nonumber,\\
   & = & \displaystyle \frac{X(f^2)}{f^2} g(\partial_t,Y)-
   \frac{Y(f^2)}{f^2}g(\partial_t,X) \nonumber.
\end{eqnarray}
Hence,
\begin{equation}\label{eq:curl partial t orth}
\begin{split}
&(\curl \partial_t) (X,Y)=0 \textrm{ for all } X,Y
\textrm{ orthogonal } \\
&\qquad\textrm{vector fields to } \partial_t \textrm{ on }
(\mathbb{R} \times F,g),
\end{split}
\end{equation}
i.e., $\partial_t$ is orthogonally irrotational. Up to this point,
we have not considered the compactness and the simply connectedness
of the Riemannian part.

Notice that letting $\kappa$ be a Killing vector field on
$(\mathbb{R} \times F,g)$ with $(F,g_F)$ compact, Theorem
\ref{thm:killing compact fiber} implies that $\kappa = a
\partial_t + K$ where $a$ is a constant and $K$ is a Killing
vector field on $(F,g_F)$ with $K(f)=0$. Hence and recalling that
$(\curl \kappa) (X,Y)=0$ for all $X,Y$ orthogonal vector fields to
$\partial_t$ on $(\mathbb{R} \times F,g)$, the $\curl$ on
$(\mathbb{R} \times F,g)$ is linear and \eqref{eq:curl partial t
orth}, we obtain that
\begin{equation} \label{eq:curl 1}
    \curl K (X,Y) = 0,
\end{equation}
for all $X,Y$ orthogonal vector fields to $\partial_t$ on
$(\mathbb{R} \times F,g)$. It is clear that the lifts of the
elements in $\mathfrak X(F)$ verify the latter condition and as
consequence \eqref{eq:curl 1}.

On the other hand, by the well known relations between the
Levi-Civita connections on $(F,g_F)$ and on the warped product
$(\mathbb{R} \times F,$ $-f^2 dt^2 + g_F)$ (see for instance
\cite{ON} p. 206 Proposition 35) and by the definition of the
$\curl$, there results $\curl_F K = 0$ ($\curl_F$ denotes the
$\curl$ on the Riemannian manifold $(F,g_F)$), i.e. $K$ is
non-rotating on $(F,g_F)$. Furthermore, when $(F,g_F)$ is simply
connected, then $K$ is a gradient (see
\cite{Bishop-Goldberg1980}), i.e. there exists $\psi \in C^\infty
(F)$ such that $\grad \psi = K$.


Since $K$ is a Killing vector field on the Riemannian manifold
$(F,g_F)$ and is the gradient of a function $\psi \in C^\infty
(F)$, then Proposition \ref{prp:special Killing} (with $f
\equiv 1$) implies that the Hessian $H_F^\psi \equiv 0$ on $F$.
Besides that if $F$ is a compact Riemannian manifold without
boundary, then by taking the $g_F-$trace of $H_F^\psi$ and applying
the maximum principle one can deduce that $\psi$ is a constant and
consequently, $K \equiv 0$.

Thus we have established that $\kappa = a \partial_t$, where $a$ is
constant.
\end{proof}

Notice that \eqref{eq:curl partial t} implies that $\partial_t$ is
non-rotating on the standard static space-time of the form
$(\mathbb{R} \times F,-f^2 dt^2 + g_F)$ if $f$ is a positive
constant. Hence, we can state and  prove the following lemma.

\begin{lem}\label{lem:partial t irrot}$\partial_t$ is non-rotating
on the standard static space-time of the form
$(\mathbb{R} \times F,g:=-f^2 dt^2 + g_F)$ if and only if $f$ is a
positive constant.
\end{lem}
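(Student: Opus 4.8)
The plan is to prove the biconditional in Lemma \ref{lem:partial t irrot} by establishing each direction separately, both of which follow quickly from the curl computation already carried out in \eqref{eq:curl partial t}.

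For the direction $(\Leftarrow)$, suppose $f$ is a positive constant. Then $X(f^2)=0$ for every vector field $X$ on $\mathbb{R}\times F$, so both terms on the right-hand side of the final expression in \eqref{eq:curl partial t} vanish identically. Hence $(\curl\partial_t)(X,Y)=0$ for \emph{all} $X,Y\in\mathfrak X(\mathbb{R}\times F)$, not merely those orthogonal to $\partial_t$, which is exactly the statement that $\partial_t$ is non-rotating. This direction is entirely routine and is essentially the observation already recorded in the sentence immediately preceding the lemma.

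For the converse $(\Rightarrow)$, suppose $\partial_t$ is non-rotating, so $(\curl\partial_t)(X,Y)=0$ for all $X,Y$. Using \eqref{eq:curl partial t} this means
\begin{equation*}
\frac{X(f^2)}{f^2}\,g(\partial_t,Y)-\frac{Y(f^2)}{f^2}\,g(\partial_t,X)=0
\end{equation*}
for every pair $X,Y$. The idea is to make a clever choice of test vector fields to isolate the derivatives of $f$. Since $f$ depends only on the $F$-coordinate, I would take $Y=\partial_t$, for which $g(\partial_t,\partial_t)=-f^2\neq 0$ and $\partial_t(f^2)=0$, and let $X$ range over the (lifts of) vector fields on $F$. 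With this choice the second term vanishes because $Y(f^2)=\partial_t(f^2)=0$, leaving $\frac{X(f^2)}{f^2}\,g(\partial_t,\partial_t)=-X(f^2)=0$. As $X$ ranges over all vector fields tangent to $F$ and $f$ is a function on $F$, the condition $X(f)=0$ for all such $X$ forces $\grad_F f\equiv 0$, hence $f$ is locally constant; since $F$ is connected (as assumed throughout the paper for the underlying manifolds), $f$ is a positive constant.

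I do not anticipate a genuine obstacle here, since the hard analytic content lives in the earlier results; the only point requiring a little care is the selection of test fields in the converse. One must ensure the chosen $X,Y$ genuinely exhaust enough directions to conclude $\grad_F f=0$ rather than merely a single directional vanishing, which is why letting $X$ sweep out all of $\mathfrak X(F)$ while fixing $Y=\partial_t$ is the cleanest route. Connectedness of $F$ then upgrades ``locally constant'' to ``constant,'' completing the argument.
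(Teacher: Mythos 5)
Your proof is correct and follows essentially the same route as the paper's second (``alternative and simpler'') proof: both directions are read off from the curl formula \eqref{eq:curl partial t}, with the converse obtained by evaluating $\curl\partial_t$ on a pair consisting of $\partial_t$ and an arbitrary lift from $F$ to conclude $Z(f^2)=0$, hence $f$ constant by connectedness. The only cosmetic difference is that you swap the roles of the two slots relative to \eqref{eq:partial t - 4}, which changes nothing.
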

%
%
\noindent\emph{Proof 1.} First of all we recall
$g(\partial_t,\partial_t)=-f^2$. Then, by the hypothesis and the
properties that characterized the Levi-Civita connection, for any
vector field $Z$ on $(\mathbb{R} \times F,g:=-f^2 dt^2 + g_F)$
the following holds
\begin{equation}\label{eq:partial t - 1}
\begin{split}
    0&= \curl \partial_t (\partial_t,Z) = g(\nabla _{\partial_t}
    \partial_t,Z) - \underbrace{g(\nabla _{Z}
    \partial_t,\partial_t)}_{\frac{1}{2} Z
    g(\partial_t,\partial_t)}\\
    &=g(\nabla _{\partial_t}
    \partial_t,Z) + \displaystyle \frac{1}{2} Z f^2.
\end{split}
\end{equation}
So, by the definition of the gradient operator denoted by $\grad$,
\begin{equation}\label{eq:partial t - 2}
-\nabla _{\partial_t} \partial_t = \displaystyle \frac{1}{2} \grad
f^2.
\end{equation}
Now, by applying \cite[p. 206 - Proposition 35]{ON} or
\cite[Theorem 3.4]{dobarro-unal04} as above,
\begin{equation}\label{eq:partial t - 3}
-\nabla _{\partial_t} \partial_t = - \underbrace{\nabla^{(-dt^2)}
_{\partial_t}
\partial_t}_{0} + f \underbrace{(-dt^2)(\partial_t,\partial_t)}_{-1} \grad
f =-\displaystyle \frac{1}{2} \grad f^2.
\end{equation}
The latter and the expression \eqref{eq:partial t - 2} imply
$\grad f^2 = 0$, more explicitly $f$ is a positive constant.
%
\hfill $\square$

\medskip

%
%
\noindent\emph{Proof 2.} An alternative and simpler proof follows
from \eqref{eq:curl partial t}. Indeed, an immediate consequence
of the latter is
\begin{equation}\label{eq:partial t - 4}
    (\curl \partial_t)(\partial_t,Z) = Z (f^2),
\end{equation}
for any vector field $Z$ on $(\mathbb{R} \times F,g:=-f^2 dt^2 +
g_F)$. Thus, if $\partial_t $ is non-rotating, then $f$ is a
positive constant.
%
\hfill $\square$
%


\begin{cor}
\label{cor:irrotational}Let $(F,g_F)$ be a compact and simply
connected Riemannian manifold. If $f \in C^\infty_{>0} (M)$ is
nonconstant, then there is no nontrivial (non-identically zero)
non-rotating Killing vector field on the standard static
space-time $(\mathbb{R} \times F,g:=-f^2 dt^2 + g_F)$.
\end{cor}

\begin{proof} In order to apply Proposition \ref{prp:time irrotational},
it is sufficient to note that if $\kappa$ is a non-rotating
Killing vector field on $(\mathbb{R} \times F,g:=-f^2 dt^2 + g_F)$,
then $(\curl \kappa) (X,Y)=0$ for all $X,Y$ orthogonal vector
fields to $\partial_t$ on $(\mathbb{R} \times F,g)$. Thus, $\curl
\kappa = a \curl \partial_t = 0$. So, by Lemma \ref{lem:partial t
irrot}, $\kappa=0$ or $f$ is a positive constant.
\end{proof}

\begin{rem} Notice that in Proposition \ref{prp:time irrotational}
the involved vector fields necessarily turn out to be causal
(i.e., non-space-like). The conclusion would be wrong if we
eliminated the simply connectedness. For example, consider the
vector field $a \partial_t + \partial_\theta$ where $a<1$ on
$(\mathbb{R} \times \mathbb{S}^1,-{\rm d}t^2 + {\rm d}\theta^2)$.
This is a non-rotating Killing vector field and yet not time-like
due to $a<1$, so its pseudo-norm is $-a^2+1>0$.
\end{rem}

\section{Conclusions}

It is very well known that a space-time possesses a symmetry if it
admits a Killing vector fields. Thus existence and
characterization problems of Killing vector fields are extremely
important in the geometry of space-times. Therefore, we considered
Killing vector fields of standard static space-times and roughly
proved that if a vector field $K$ on the space-time is Killing,
then $K$ has to be of the form:
$$ K= \psi h \partial_t + \phi^{\overline b} K_{\overline b}
,$$ where $\{K_{\overline b}\}_{1 \le \overline b \le \overline
m}$ is a basis of Killing vector fields on $(F,g_F)$ and $h,
\phi^{\overline b} \in \mathcal C^\infty(I)$ and $\psi \in
\mathcal C^\infty(F)$ satisfy the system
\begin{equation*}
\left\{
\begin{array}{rcl}
 h^\prime \psi+ \phi^{\overline
b} K_{\overline b}(\ln f)& = & 0, \\
{\rm d} \phi^{\overline b} \otimes g_F(K_{\overline b},\cdot) +
g_I(h \partial_t, \cdot) \otimes f^2 {\rm d} \psi & = & 0.
\end{array}
\right.
\end{equation*}
In {\it Theorem \ref{thm:Killing ssst}} we have characterized the
solutions of the latter system. We remarked the centrality of the
problem \eqref{eq:pb Killing 1} or equivalently \eqref{eq:3eqnst
4} and \eqref{eq:3eqnst 5}.

As a consequence of {\it Theorem \ref{thm:Killing ssst}} and the
well known results about the eigenvalues and eigenfunctions of a
positively weighted elliptic problem on a compact Riemannian
manifold without boundary, we also provided a characterization of
the Killing vector fields on a standard static space-time with
compact Riemannian part in {\it Theorem \ref{thm:killing compact
fiber}}. Note that by combining this theorem with the vanishing
results of Bochner (see \textit{Remark \ref{rem:Lie algebra 1}}),
we obtain that in a standard static space-time with compact
Riemannian part of negative Ricci curvature without boundary, the
unique Killing vector fields are time-like of the form $c
\partial_t$ where $c \in \mathbb{R}$ is constant.

Furthermore, in Corollary \ref{cor:irrotational} we show that there
is no non trivial non-rotating Killing vector field on a standard
static space-time where the so called natural space is compact and
simply connected.

\appendix{}
\section{}

Suppose that $\mathscr{M}$ is a module over a ring and
$\mathscr{W} \subseteq \mathscr{M}$. If $V \in \mathscr{M}$, then
we will use the following notation $V + \mathscr{W} = \{V + W : W
\in \mathscr{W} \}$.

Let $(F,g_F)$ be a Riemannian manifold of dimension $s$. Consider
the Lie algebra of Killing vector fields on
$(F,g_F)$ denoted by $\mathscr{K}$.
Given $\varphi, \psi \in C^\infty (F)$ we denote
\begin{equation}\label{}
    \mathscr{K}_\varphi ^\psi = \{K \in \mathscr{K}: K(\varphi)=\psi
    \}.
\end{equation}
$\mathscr{K}_\varphi ^0$ is an $\mathbb{R}-$subspace of
$\mathscr{K}$ (it is also a sub-Lie algebra of $\mathscr{K}$).

\noindent By linear algebra arguments, it is clear that
\begin{equation}\label{eq:K-lin-man}
    \mathscr{K}_\varphi ^\psi = \widehat{K} + \mathscr{K}_\varphi ^0,
\end{equation}
where $\widehat{K} \in \mathscr{K}_\varphi ^\psi$ is fixed.

\bigskip

Let us assume the hypothesis of Theorem \ref{thm:Killing ssst} where
$(\nu, \psi)$ is a solution of \eqref{eq:pb Killing 1}. Then, by
Theorem \ref{thm:Killing ssst} and \eqref{eq:K-lin-man}, the set
of Killing vector fields on $I _f\times F$ is given by
\begin{equation}\label{eq:Killing vf app}
\psi h \partial_t + \int_{t_0}^t h(s) ds f^2\grad \psi +
\widehat{K} + \mathscr{K}_{\ln f}^0,
\end{equation}
where $\widehat{K} \in \mathscr{K}_{\ln f} ^{-h'(t_0)\psi}$ is
fixed.

\noindent If $\nu \neq 0$, then $\widehat{K}$ may be taken as
$\displaystyle -\frac{h'(t_0)}{\nu} f^2 \grad \psi$. Indeed, this
is an immediate consequence of $(f^2 \grad_F \psi)(\ln f)=\nu
\psi$ in \eqref{eq:3eqnst 2 h no 0}.

\noindent Notice that the description of the Killing vector fields
on standard static space-times given by \eqref{eq:Killing vf app}
shows up again in the centrality of the problem \eqref{eq:pb
Killing 1} (and naturally of the equivalent problems
\eqref{eq:3eqnst 4} and \eqref{eq:3eqnst 5}) in the study of this
question.

\begin{ex}(\textit{Killing vector fields in the Minkowski space-time})
Let $(F,g_F)$ $= (\mathbb{R}^s,g_0)$ where $g_0$ is the canonical
metric and $f \equiv 1$ be. Thus, it is easy to show that the
solutions of \eqref{eq:3eqnst 5} are $(\nu,\psi)$ where $\nu = 0$
or $\psi \equiv 0$. Furthermore if $\nu = 0$, then $\psi(x)=c^i
x_i+d$ where $\forall i: 1\le i \le s, c^i \in \mathbb{R}$ and $d
\in \mathbb{R}$. Recall that for $\nu =0$, $h(t)=at+b$ where $a,b
\in \mathbb{R}$.

\noindent On the other hand, the condition $\widehat{K} \in
\mathscr{K}_{\ln f} ^{-h'(t_0)\psi}$ is now equivalent to $h'(t_0)
(c^i x_i+d) \equiv 0$.

\noindent Hence, all the Killing vector fields of the Minkowski
space-time are
\begin{equation*}
    (c^i x_i+d)(at+b) \partial_t +
\int_{t_0}^t (as+b) ds  \, c^i \partial_i + \mathscr{K}
   ,
\end{equation*}
where $a,b,c_i,d \in \mathbb{R}$ satisfy $a(c^i x_i+d) \equiv 0 $.
Precisely, these are
\begin{equation*}
    (c^i x_i+d) \partial_t + (t-t_0)
\, c^i \partial_i + \mathscr{K}
\end{equation*}
or equivalently (taking $t_0=0$)
\begin{equation*}
    c^i\underbrace{( x_i \partial_t + t \partial_i )}_{\textrm{Lorentz boosts}} +
    d \partial_t + \mathscr{K}
   ,
\end{equation*}
where $c_i,d \in \mathbb{R}$. Thus the dimension of the Lie
algebra of the Killing vector fields of the Minkowski space-time
is $\displaystyle s + 1 + s(s+1)/2 = (s+1)(s+2)/2$.


\end{ex}


\end{document}